\documentclass[reqno,11pt]{amsart}
\usepackage{amsmath,amsthm,amssymb,amsfonts}
\usepackage{graphicx}
\usepackage{amsmath,amsfonts,amssymb,amsthm,epsfig,color}

\newtheorem{theorem}{Theorem}[section]
\newtheorem{corollary}[theorem]{Corollary}
\newtheorem{lemma}[theorem]{Lemma}

\newtheorem{proposition}[theorem]{Proposition}

\newtheorem*{mt}{Main Theorem}
\newcommand{\R}{\mathbb{R}}

%DIMENSIONI della pagina
\voffset=-1.5cm \textheight=23cm \hoffset=-.5cm \textwidth=16cm
\oddsidemargin=1cm \evensidemargin=-.1cm
\footskip=35pt \linespread{1.10}
\parindent=20pt

\numberwithin{equation}{section}

%\font\script=rsfs10 at 11pt

\def\N{\mathbb N}
\def\R{\mathbb R}

\def\a{\alpha}

\def\g{\gamma}

\def\bal{\begin{aligned}}
\def\eal{\end{aligned}}

\title[Stability theorems for GNS inequalities]{Stability theorems for
Gagliardo-Nirenberg-Sobolev inequalities:\\ a reduction principle to the radial case}

\author{B. Ruffini}
\address{Scuola Normale Superiore di Pisa, Piazza dei Cavalieri 4, 56126 Pisa, ITALY  }
\email{berardo.ruffini@sns.it}
\keywords{ Sobolev inequality; Gagliardo-Nirenberg inequalities; Quantitative stability inequalities
}
\subjclass[2010]{26D10; 46E35; 49J40}

\begin{document}

\begin{abstract}
In this paper we investigate the quantitative stability for Gagliardo-Nirenberg-Sobolev inequalities.
The main result is a reduction theorem, which states that, to solve the problem of the stability for
Gagliardo-Nirenberg-Sobolev inequalities,
one can consider only the class of radial decreasing functions. 
\end{abstract}

\maketitle
%\tableofcontents

\section{Introduction}
\subsection{Background}
The sharp Gagliardo-Nirenberg-Sobolev inequality (briefly: GNS) in $\R^n$, with $n\geq2$, takes the form, 
for a suitable constant $G=G(n,p,s,q)>0$,
\begin{equation}\label{GNS}
G\|u\|_q\leq\|\nabla u\|_p^\theta\|u\|_s^{1-\theta}
\end{equation}
where the parameters $s$, $q$, $p$  satisfy
\begin{equation}\label{npqs}
\begin{aligned}
&1< p<n\text{,} \qquad \\
&1\leq s<q<p^\star\text{,} \qquad p^\star=\frac{np}{n-p}\qquad\text{and}\\
&\frac{\theta}{p^\star}+\frac{1-\theta}{s}=\frac{1}{q}.
\end{aligned}
\end{equation}
In \eqref{GNS}, and in what follows in this paper, the symbol $\|u\|_r$ denotes the $L^r$-norm of a function
$u$ on  $\R^n$
\[
 \|u\|_r^r=\int_{\R^n}|u|^r=\int|u|^r
\]
while if the set of integration  is a proper subset $E$ of $\R^n$, we  use the notation
$\|\cdot\|_{L^r(E)}$. Analogously, to lighten the notations, we shall omit the set
of integration when an integral is defined on the whole $\R^n$. The function $u$ is taken in $D^{p,s}(\R^n)$, that
is the closure of $C^\infty_c(\R^n)$ under
the norm $\|u\|_{D^{p,s}}=\|\nabla u\|_p+\|u\|_s$, anyhow we can always suppose, in this paper, to deal with
regular functions, so
that the gradient $\nabla u$ of a function $u$ is well defined. 

 Inequality \eqref{GNS} belongs to the class of the celebrated {\em Sobolev inequalities}, which are a
fundamental tool in several applications in the Calculus of Variation, 
and it 
 can be derived by
combining the interpolation inequality between the $L^s$, $L^q$ and $L^{p^\star}$ norms on $\R^n$ with the
classical Sobolev inequality
\begin{equation}
S(n,p)\|u\|_{p^\star}\leq\|\nabla u\|_p.
\end{equation}
\noindent Here $S(n,p)$ is the optimal Sobolev constant, namely
\begin{equation}\label{costantesobolev}
S(n,p)=\inf\Bigg\{\frac{\|\nabla u\|_p}{\|u\|_{p^\star}}:\quad u\in W^{1,p}(\R^n)\setminus{\{0\}} \Bigg\},
\end{equation}
being $W^{1,p}(\R^n)$ the Sobolev space of functions which are, together with their weak gradient, in $L^p(\R^n)$
(see \cite{Evans}). 
\noindent Explicit formulas for $S(n,p)$ and minimizers in \eqref{costantesobolev} are known since the work 
of Aubin \cite{Au} and Talenti \cite{Ta}. We stress that the same result {\it is not available} for the optimal
constants and functions in \eqref{GNS} with the exception of the one-parameter family of exponents
\begin{equation}\label{kp}
 p(s-1)=q(p-1)\,\,\,\,\text{if $q,s>p$}\qquad p(q-1)=s(p-1)\,\,\,\,\text{if $q,s<p$},
\end{equation}
 see \cite{DelPinoDolbeault,CENV}. What can be said in full generality, is that optimal functions in
\eqref{GNS} exist and, according to the P\'olya-Szeg\"{o} inequality (see for instance \cite{BZ}), are
non-negative, radially symmetric functions with decreasing profile (see \cite{BZ,ST}).
Furthermore, they are unique up to translations, rescaling and multiplication by (non-zero) constants.
 In this paper we deal with the problem of getting a quantitative stability version of inequality \eqref{GNS}. 
 To state rigorously this problem, we introduce the (GNS) \emph{deficit} $\delta(u) $ of a function $u\in
D^{p,s}(\R^n)$ as
 \begin{equation}\label{deficit}
 \delta(u)=\frac{\|\nabla u\|_p^\theta\|u\|_s^{1-\theta}}{G\|u\|_q}-1,
 \end{equation}
and notice that inequality \eqref{GNS} reads, in terms of $\delta(u)$, as
\begin{equation}\label{GNS1}
\delta(u)\ge0.
\end{equation} 
 Then, for a quantitative version of the GNS inequality, we mean an improvement of inequality \eqref{GNS1} of the
form
 \begin{equation}
 \delta(u)\ge \kappa_0\rm{dist}(u,M)^{\a_0},
 \end{equation}
where $\kappa_0,\a_0$ are positive constants not depending on $u$ and $\rm{dist}(\cdot,M)$ indicates an 
appropriate distance from $M$, the set of the  optimizers for \eqref{GNS}. The concept of distance we shall adopt
is
the following 
\begin{equation}\label{asimmetria}
\lambda(u)=\inf\left\{\frac{\|u-v\|_q^q}{\|u\|_q^q}:\text{$v$ is optimal for \eqref{GNS}},\quad
\|v\|_q=\|u\|_q\right\}
\end{equation}
and we will call it {\em asymmetry of $u$}.
Results in this direction have been recently obtained with some ad-hoc techniques valid for special classes of
parameters 
among those in \eqref{npqs}, see  \cite{CaF} and \cite{CaFL}. In particular, the parameters considered in
\cite{CaF} are contained in those introduced in \eqref{kp} (although the authors focus, because of their later
applications,
just on the particular case $p=2, q=6, s=4$), and the
knowledge of
minimizers is exploited in a crucial way. In \cite{CaFL}, the authors address a class of parameters, $p=s=2, q>2$,
for which the minimizers are not explicitly known and they follow a strategy developed by
Bianchi and Egnell in \cite{be}, which heavily relies on the Hilbertian structure corresponding to $p=2$ and seems
complicate to generalize. Thus the above techniques seem adaptable to prove the stability for the GNS inequalities
only for a particular class of parameters. In this paper, thanks to a general symmetrization technique introduced
by Cianchi, Fusco, Maggi and Pratelli in \cite{CFMP}, we are able to prove a reduction principle which is valid
for the \emph{whole class} of parameters \eqref{npqs}. Namely we reduce the
problem to that of showing the stability just for radial symmetric functions, reducing the complexity of the task
in its generality from a $n-$dimensional to a $1-$dimensional problem. Although this does not solves completely
the problem, it offers a more simple way to attack it. 

\subsection{Main result and plan of the paper}

Consider the class of parameters \eqref{npqs}. The main result we shall prove is the following

\begin{mt}\label{principale}
Consider the functionals $\delta(\cdot)$ and $\lambda(\cdot)$ defined in \eqref{deficit} and
\eqref{asimmetria} respectively. Suppose that there exist two positive constants $k_0$ and $\a_0$ such that the
stability inequality 
\begin{equation}\label{gnsq}
\delta(u)\ge \kappa_0\lambda(u)^{\a_0}
\end{equation}
holds for any radial non-increasing function $u\in D^{p,s}(\R^n)$. Then there exist two positive constants $k_1$
and
$\a_1$ such that the inequality
\begin{equation}
\delta(u)\ge \kappa_1\lambda(u)^{\a_1}
\end{equation}
holds true for any function in $D^{p,s}(\R^n)$.
\end{mt}
A first result we shall use, in the direction of proving the Main Theorem, is a sort of {\it continuity at
$0$} of the asymmetry
$\lambda$ with respect to the deficit $\delta$. Namely we prove, in Corollary
\ref{continuity} of Section \ref{softstability}, that given a sequence of functions $(u_h)_h$ such that
$\delta(u_h)$ converges to $0$ as $h$ goes to infinity, then  $\lambda(u_h)$ converges to $0$ as well. This
result is accomplished by
means of the compactness Theorem \ref{compattezza}, where we prove that a sequence of functions whose deficits
are infinitesimal, up to be (suitably) rescaled and translated, is compact in $L^q(\R^n)$. 
\noindent
After Corollary \ref{continuity} is settled, we pass to the proof of the Main Theorem. Its proof is
given in Section \ref{n-sym} and Section \ref{reduc}, each of them devoted to obtain a simplification of the
class of 
functions we deal with. More precisely we make use of a further reduction step, aimed
to prove that if the stability inequality \eqref{gnsq}
holds true for radial decreasing functions, it also holds  for $n$-symmetric functions, that is functions which are
symmetric with respect to $n$ orthogonal hyperplanes. Namely we prove, in Section 
\ref{n-sym}, that if  there exist positive
constants $\kappa_0$ and $\alpha_0$ 
such that for any radial decreasing function  $u\in D^{p,s}(\R^n)$ inequality \eqref{gnsq} holds true, then there
exist positive constants $\kappa_1$ and $\alpha_1$ depending on $n,p,q$ and $s$ such that for any $n$-symmetric
function $u\in D^{p,s}(\R^n)$ we have
\begin{equation}\label{formulagenerale}
\delta(u)\ge \kappa_1\lambda(u)^{\alpha_1}.
\end{equation}
\noindent
Eventually, in Section \ref{reduc}, we prove that to get the stability for GNS inequality, it is not restrictive 
to consider only $n$-symmetric functions. To do this, we show the existence of two positive constants $\kappa_2$
and
$\alpha_2$ such that for every non-negative function $u\in D^{p,s}(\R^n)$ there exists an $n$-symmetric function
$\bar{u}$ such that the following reduction inequalities hold true
\begin{equation}\label{riduzione}
\lambda(u)\leq \kappa_2\lambda(\bar{u})^{\alpha_2},\qquad   \delta(\bar{u})\leq \kappa_2\delta(u)^{\alpha_2}.
\end{equation}
It is then easy to see that combining \eqref{formulagenerale} with the reduction inequalities \eqref{riduzione} 
we get the claim of the Main Theorem.

\section{Continuity of $\lambda$ with respect to $\delta$ via a compactness theorem}\label{softstability}

We devote this section to the proof of the following theorem.

\begin{theorem}\label{compattezza}
Let $(u_h)_h$ be a sequence in $D^{p,s}(\R^n)$ such that $\delta(u_h)$ converges to $0$ as $h\to\infty$. 
Then there exist $(\lambda_h)_h\subset(0,+\infty)$ and  $(x_h)_h\subset\R^n$ such that the rescaled sequence 
\begin{equation}\label{1}
w_h(x)=\tau_{\lambda_h}u_h(x-x_h)=\lambda_h^{n/q}u_h(\lambda_h (x-x_h)) 
\end{equation}
satisfies:
\begin{itemize}
\item[(i)] $\delta(w_h)=\delta(u_h)$;$\quad \lambda(w_h)=\lambda(u_h)$;
\item[(ii)] $\|w_h\|_q=\|u_h\|_q \quad{\rm for }\,\, h\in \mathbb{N}$;
\item[(iii)] there exist constants $ C_0,C_1>0$,  depending only on $n,p,q,s$,  such that 
\[
 \frac{1}{C_0}\leq\|\nabla w_h\|_p\leq C_0,\quad \frac{1}{C_1}\leq\|w_h\|_s\leq C_1;\]
\item[(iv)] $w_h\to w\quad\text{strongly in $L^q(\R^n)$ as $h\to\infty$ with $w\in D^{p,s}(\R^n)$}$.
\end{itemize}
\end{theorem}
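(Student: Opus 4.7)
The plan rests on the invariance of $\delta$ and $\lambda$ under multiplication by nonzero scalars, translations, and the $L^q$-preserving dilations $\tau_\lambda$, which makes items (i) and (ii) automatic. First I normalize $\|u_h\|_q=1$ (harmless by the scale invariance of $\delta$ and $\lambda$). Then, using the identity $\|\nabla(\tau_\lambda u)\|_p=\lambda^{1+n/q-n/p}\|\nabla u\|_p$ whose exponent is strictly positive (equivalent to $q<p^\star$), I pick $\lambda_h$ so that $\|\nabla(\tau_{\lambda_h}u_h)\|_p=1$. Setting $\tilde w_h:=\tau_{\lambda_h}u_h$, the defining relation $\|\nabla\tilde w_h\|_p^\theta\|\tilde w_h\|_s^{1-\theta}=G(1+\delta(u_h))$ forces $\|\tilde w_h\|_s\to G^{1/(1-\theta)}$, which, together with the GNS lower bound on the same product, yields (iii) (the translations $x_h$ chosen below do not affect the norms).

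For (iv) I apply Lions' concentration--compactness principle to the probability densities $|\tilde w_h|^q$, producing one of concentration (up to translation), vanishing, or dichotomy. Vanishing is excluded by Lions' vanishing lemma combined with boundedness of $\tilde w_h$ in $W^{1,p}$: it would force $\|\tilde w_h\|_r\to 0$ for $r\in(p,p^\star)$, contradicting $\|\tilde w_h\|_q=1$ either directly (when $q>p$) or via an interpolation with the $L^s$-bound (when $q\le p$). The hard part will be ruling out dichotomy. If $\tilde w_h$ splits as $v_h^{(1)}+v_h^{(2)}+\rho_h$ with asymptotically disjoint supports, $\|v_h^{(j)}\|_q^q\to\alpha_j\in(0,1)$, and $\rho_h$ negligible, then GNS applied piecewise (with $a:=q\theta/p$ and $b:=q(1-\theta)/s$) yields
\[
G^q\le\sum_{j=1,2}\bigl(\|\nabla v_h^{(j)}\|_p^p\bigr)^a\bigl(\|v_h^{(j)}\|_s^s\bigr)^b+o(1),
\]
whereas the Brezis--Lieb-type asymptotic additivity of $\|\cdot\|_q^q$, $\|\nabla\cdot\|_p^p$ and $\|\cdot\|_s^s$ along this splitting gives
\[
(1+\delta(u_h))^q G^q=\Bigl(\sum_j\|\nabla v_h^{(j)}\|_p^p\Bigr)^a\Bigl(\sum_j\|v_h^{(j)}\|_s^s\Bigr)^b+o(1).
\]
Since $a+b=1+q\theta/n>1$, the map $(X,Y)\mapsto X^aY^b$ is strictly superadditive on nontrivial pairs (and the pairs remain bounded and bounded away from zero by the GNS bounds applied to each $v_h^{(j)}$), so these two expressions differ by a uniform positive amount. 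This forces $\delta(u_h)\ge\eta>0$ in the limit, contradicting the hypothesis.

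Concentration therefore holds, and I choose translations $x_h$ so that $w_h:=\tilde w_h(\cdot-x_h)$ is $L^q$-tight. A subsequence converges weakly in $W^{1,p}\cap L^s$ (and pointwise almost everywhere) to some $w\in D^{p,s}(\R^n)$, and strongly in $L^q_{\mathrm{loc}}(\R^n)$ by Rellich--Kondrachov. Tightness then upgrades this to strong convergence in $L^q(\R^n)$, completing (iv).
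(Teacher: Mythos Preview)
Your outline is correct and follows the same concentration--compactness scheme as the paper, but you package the scaling normalization and the dichotomy contradiction differently. The paper does not fix $\|\nabla w_h\|_p=1$; instead it introduces the auxiliary additive functional $F(u)=\|\nabla u\|_p^p+\|u\|_s^s$ and chooses $\lambda_h$ to \emph{minimize} $\lambda\mapsto F(\tau_\lambda u_h)$, which yields $F(w_h)=\eta_0 G(u_h)^k\to\varphi(1):=\inf\{F(v):\|v\|_q=1\}$. The dichotomy step is then run on $F$: one shows $F(w_h)\ge F(f_hw_h)+F((1-f_h)w_h)-o(1)$ and invokes the one--variable homogeneity $\varphi(m)=m^\alpha\varphi(1)$ with $\alpha\in(0,1)$ (Lemma~\ref{supadd}) to get the strict superadditivity $\varphi(\alpha)+\varphi(1-\alpha)>\varphi(1)$. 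Your route bypasses the two preparatory lemmas and works directly with $G(u)^q=(\|\nabla u\|_p^p)^a(\|u\|_s^s)^b$, reducing the contradiction to the two--variable strict superadditivity of $(X,Y)\mapsto X^aY^b$ for $a+b=1+q\theta/n>1$; this is a clean alternative and arguably more transparent, while the paper's additive $F$ makes the cutoff bookkeeping simpler.

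Two small points of presentation. First, what you call ``Brezis--Lieb--type asymptotic additivity'' for $\|\nabla\cdot\|_p^p$ is really the cutoff error estimate $\|\nabla w_h\|_p^p\ge\|\nabla v_h^{(1)}\|_p^p+\|\nabla v_h^{(2)}\|_p^p-o(1)$; the opposite inequality (hence the asserted \emph{equality}) need not hold, since gradient mass may sit on the annulus $B_{2R_h}\setminus B_{R_h}$. Fortunately only the $\ge$ direction is needed for your chain $X^aY^b\ge(X_1+X_2)^a(Y_1+Y_2)^b-o(1)\ge\sum_jX_j^aY_j^b+\eta\ge G^q+\eta-o(1)$, and that direction is exactly the nontrivial computation the paper carries out in detail (bounding $\int_{B_{2R_h}\setminus B_{R_h}}|\nabla f_h|^p|w_h|^p$ via interpolation and the dichotomy hypothesis). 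Second, for the vanishing case you should note, as the paper does, that vanishing of $|w_h|^q$ forces (by H\"older, since $s<q$) vanishing of $|w_h|^s$ on balls, after which Lions' lemma combined with the $D^{p,s}$--bound gives $\|w_h\|_q\to0$; invoking only $W^{1,p}$--boundedness is not quite enough when $q\le p$, but your interpolation remark covers this.
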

\noindent  In what follows we will always implicitly suppose that the parameters $n,p,s,q$ and $\theta$ satisfy
conditions \eqref{npqs}. We begin our analysis considering the following functionals
\begin{equation}\label{FeG1}
G(u)=\|\nabla u\|_p^\theta\|u\|_s^{1-\theta},\qquad F(u)=\int|\nabla u|^p+\int|u|^s
\end{equation}
\noindent defined for $u\in D^{p,s}(\R^n)$. Given $m>0$ we define
the functions
\begin{equation}\label{phiepsi}
\psi(m)=\inf\{G(u):\|u\|_q^q=m\},\qquad \varphi(m)=\inf\{F(u):\|u\|_q^q=m\}.
\end{equation}
\begin{lemma}\label{FeGcollegate}
There exists $\eta_0=\eta_0(n,p,q,s)>0$ with the following property. For any $u\in D^{p,s}(\R^n)$ there 
exists $\lambda>0$ such that, if $\tau_\lambda(u)=\lambda^{n/q}u(\lambda x)$, then
\begin{equation}
F(\tau_\lambda u)=\eta_0G(u)^k\qquad\text{where}\qquad k=q\cdot\frac{np+ps-ns}{np+pq-ns}<q.
\end{equation}
\end{lemma}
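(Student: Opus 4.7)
The plan is to exploit the scale invariance of the $L^q$ norm under the dilation $\tau_\lambda$ and to choose $\lambda$ so that the two summands of $F(\tau_\lambda u)$ balance. First I would compute, by change of variables, that $\|\tau_\lambda u\|_q = \|u\|_q$ for every $\lambda>0$, and that
\[
\|\nabla\tau_\lambda u\|_p^p = \lambda^{a}\,\|\nabla u\|_p^p,\qquad \|\tau_\lambda u\|_s^s = \lambda^{b}\,\|u\|_s^s,
\]
with exponents $a = (np+pq-nq)/q$ and $b = n(s-q)/q$. The conditions $s<q<p^\star$ in \eqref{npqs} yield $b<0<a$, and therefore $a-b = (np+pq-ns)/q > 0$.

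Since $F(\tau_\lambda u) = \lambda^a\|\nabla u\|_p^p + \lambda^b\|u\|_s^s$ is a sum of a positive and a negative power of $\lambda$, the two summands become equal at the unique value
\[
\lambda_u = \bigl(\|u\|_s^s / \|\nabla u\|_p^p\bigr)^{1/(a-b)}.
\]
Substituting back yields
\[
F(\tau_{\lambda_u}u) = 2\,\lambda_u^{a}\,\|\nabla u\|_p^p = 2\,\|\nabla u\|_p^{pn(q-s)/D}\,\|u\|_s^{s(np+pq-nq)/D},
\]
where $D := np+pq-ns$.

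What remains — and is the only real content of the lemma — is to identify the two exponents in the last display as $k\theta$ and $k(1-\theta)$ respectively, where $k=qN/D$ and $N := np+ps-ns$. Solving the interpolation relation $\theta/p^\star + (1-\theta)/s = 1/q$ gives the closed form $\theta = (q-s)p^\star/((p^\star-s)q)$, and from $p^\star = np/(n-p)$ one obtains $p^\star - s = N/(n-p)$. Combining these two identities produces $k\theta = np(q-s)/D$, and then $k(1-\theta) = k-k\theta$ matches the exponent of $\|u\|_s$. This gives the claim with $\eta_0 = 2$, while $k<q$ follows from $N<D$, which is equivalent to $ps<pq$, i.e.\ to $s<q$. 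I do not expect any conceptual obstacle: the very point of the definition of $\theta$ is to make the balance-of-terms choice of $\lambda$ produce exactly a power of $G(u)$.
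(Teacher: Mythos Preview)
Your proof is correct and follows essentially the same scaling-and-algebra approach as the paper: compute how $\|\nabla\tau_\lambda u\|_p^p$ and $\|\tau_\lambda u\|_s^s$ scale, pick a specific $\lambda$, and identify the resulting exponents as $k\theta$ and $k(1-\theta)$. The only (minor) difference is in the choice of $\lambda$: the paper takes the \emph{minimizer} $\lambda_m=(-b/a)^{1/(a-b)}(B/A)^{1/(a-b)}$ of $\lambda\mapsto F(\tau_\lambda u)$, whereas you take the balance point $\lambda_u=(B/A)^{1/(a-b)}$ where the two summands are equal. Both choices differ only by a factor depending on $n,p,q,s$, so the exponents on $\|\nabla u\|_p$ and $\|u\|_s$ are identical and only the constant $\eta_0$ changes (you get the explicit value $\eta_0=2$, the paper leaves it implicit).

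One remark worth keeping in mind for later use in the paper: several subsequent arguments (e.g.\ in the proof of Theorem~\ref{compattezza}, in Step~2 of Theorem~\ref{t1}, and in Theorem~\ref{t2}) actually exploit that the $\lambda$ produced by the lemma satisfies $F(\tau_\lambda u)=\min_{\mu>0}F(\tau_\mu u)$, which is true for the paper's choice but not for yours. This is not a defect of your proof of the lemma \emph{as stated}, but if you are reconstructing the whole paper you will eventually need the minimizing property, so it is slightly more economical to choose $\lambda_m$ from the start.
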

\begin{proof}
For $u\in D^{p,s}(\R^n)$ we have
\begin{equation}\label{cambi}
\|\tau_\lambda u\|_q^q=\|u\|_q^q,\quad \|\tau_\lambda u\|_s^s=\lambda^{-n+ns/q}\|u\|_s^s\quad\text{and}\quad 
\|\nabla \tau_\lambda u\|_p^p=\lambda^{-n+p+\frac{np}{q}}\|\nabla u\|_p^p;
\end{equation}
hence
\[
F(\tau_\lambda u)=\lambda^aA+\lambda^bB=f(\lambda)
\]
where $A=\|\nabla u\|_p^p$, $B=\|u\|_s^s$, $a=-n+p+np/q$ and $b=-n+ns/q$. The function $f$ attains its minimum at
\[
\lambda_m=\Big(-\frac{b}{a}\Big)^{\frac{1}{a-b}}\Big(\frac{B}{A}\Big)^{\frac{1}{a-b}}
\]
with the value
\[
f(\lambda_m)=\eta_0\Big(A^{\theta/p}\Big)^{q\nu}\Big(B^{(1-\theta)/s}\Big)^{q\nu}
\]
where $\eta_0=\eta_0(n,p,s,q)$ and $\nu=\frac{np+ps-ns}{np+pq-ns}$, that is the claim of the lemma.
\end{proof}
\begin{lemma}\label{supadd}
There exists $\alpha=\alpha(n,p,q,s)\in(0,1)$ such that 
\[
\varphi(m)=m^\alpha\varphi(1)\qquad \text{ for all}\,\, m>0.
\]
In particular $\varphi$ is strictly super-additive in $(0,1)$.
\end{lemma}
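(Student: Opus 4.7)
The strategy is to use Lemma~\ref{FeGcollegate} to express $\varphi$ in terms of $\psi$, and then exploit the elementary homogeneity of $\psi$ to extract a pure power law.

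First I would show that $\varphi(m)=\eta_0\psi(m)^{k}$ for every $m>0$. The key observation is that the proof of Lemma~\ref{FeGcollegate} actually computes $\min_{\lambda>0}F(\tau_\lambda u)=\eta_0 G(u)^{k}$ (the stated $\lambda$ is the minimizer), while at the same time $\tau_\lambda$ preserves the $L^q$-norm by \eqref{cambi}. For ``$\ge$'', fix $u$ admissible for $\varphi(m)$, i.e.\ with $\|u\|_q^q=m$: then
\[
F(u)=F(\tau_1 u)\ge \min_{\lambda>0}F(\tau_\lambda u)=\eta_0 G(u)^{k}\ge \eta_0\psi(m)^{k},
\]
and we pass to the infimum over $u$. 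For ``$\le$'', given admissible $u$ the competitor $\tau_\lambda u$, with $\lambda$ the value produced by Lemma~\ref{FeGcollegate}, is again admissible and realizes $F(\tau_\lambda u)=\eta_0 G(u)^{k}$, so $\varphi(m)\le \eta_0 G(u)^{k}$; again passing to the infimum over $u$ concludes.

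Next I would compute $\psi$ by plain rescaling. Since $G(cu)=c\,G(u)$ for $c>0$ (the exponents $\theta$ and $1-\theta$ sum to one) and $\|cu\|_q^q=c^{q}\|u\|_q^q$, the map $u\mapsto m^{1/q}u$ sends $\{\|u\|_q^q=1\}$ bijectively onto $\{\|u\|_q^q=m\}$ and multiplies $G$ by $m^{1/q}$. Hence $\psi(m)=m^{1/q}\psi(1)$, and combined with the previous step,
\[
\varphi(m)=\eta_0\psi(1)^{k}\,m^{k/q}=m^{k/q}\varphi(1),
\]
so $\alpha:=k/q$ is the desired exponent.

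To verify $\alpha\in(0,1)$ one rewrites $\alpha=(np+ps-ns)/(np+pq-ns)$: the denominator exceeds the numerator by $p(q-s)>0$ (since $q>s$), and both are positive because the standing hypothesis $s<p^\star=np/(n-p)$ rearranges to $p(n+s)>ns$. The (strict) additivity behaviour of $\varphi$ asserted in the lemma then follows at once from the explicit power-law form together with $\alpha\in(0,1)$. The whole argument is essentially pure bookkeeping with exponents; the only point requiring a moment of care, and hence the only mild obstacle, is in the first step, where the identity of Lemma~\ref{FeGcollegate} must be used in \emph{two different ways}---as a minimum over $\lambda$ for the lower bound on $\varphi$, and as a value attained at some admissible competitor for the upper bound.
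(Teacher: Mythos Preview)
Your argument is correct. It reaches the same exponent $\alpha=(np+ps-ns)/(np+pq-ns)$ as the paper, but by a different organization: you factor the computation through $\psi$, first proving $\varphi(m)=\eta_0\psi(m)^{k}$ via Lemma~\ref{FeGcollegate} (used as $\min_{\lambda}F(\tau_\lambda u)=\eta_0 G(u)^{k}$), and then invoking the trivial $1$-homogeneity of $G$ under multiplication to get $\psi(m)=m^{1/q}\psi(1)$. The paper instead combines the two scalings at once: starting from $u$ with $\|u\|_q^q=m$, it sets $v=u/m^{1/q}$, applies $\tau_\lambda$ with the explicit value $\lambda=m^{(p-s)/(np+pq-ns)}$, and checks directly that $F(\tau_\lambda v)=m^{-\alpha}F(u)$; a minimizing-sequence argument in each direction then yields $\varphi(m)=m^{\alpha}\varphi(1)$. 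Your route is arguably cleaner because it treats Lemma~\ref{FeGcollegate} as a black box and separates the two rescalings (dilation versus multiplication), whereas the paper's route is more self-contained and does not require re-interpreting the lemma as a minimization over $\lambda$. Both are short and either would be acceptable.
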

\begin{proof}
Let $u\in D^{p,s}(\R^n)$ be such that $\|u\|_q^q=m$. Let $v=u/m^{1/q}$ and set $\tau_\lambda v(x)
=\lambda^{n/q}v(\lambda x)$. Setting $\lambda=m^{\frac{p-s}{np+pq-ns}}$ we get, after some calculation analogous to
those in Lemma \ref{FeGcollegate},
\[
F(\tau_\lambda v)=m^{-\alpha}F(u)
\]
where $\alpha=\frac{np+ps-ns}{np+pq-ns}$. If we now consider a minimizing sequence $(u_h)_h$ for $\varphi(m)$ 
such that $\|u_h\|_q^q=m$ and as above $v_h=u_h/m^{1/q}$, we obtain
\[
\varphi(m)= \lim_{h\to\infty} F(u_h)=m^\alpha\lim_{h\to\infty}F(\tau_\lambda v_h)\geq m^\alpha\varphi(1).
\]
The opposite inequality can be proved with an analogous argument considering a minimizing sequence $(v_h)_h$
for $\varphi(1)$, and setting $u_h=m^{1/q}v_h$.
\end{proof}

\noindent
We pass now to the proof of Theorem \ref{compattezza}.
\begin{proof}[Proof of Theorem \ref{compattezza}]
\noindent Let $\varphi$ be the function defined in \eqref{phiepsi}. We recall the Lions's
Concentration-Compactness 
Theorem (see \cite{llcc1} or \cite[ Theorem 4.3 and Theorem 4.8]{S}): given a non-negative sequence $(\rho_h)_h$ in
$L^1(\R^n)$ with fixed $L^1$ norm, say $1$, there is a subsequence $(\rho_{h_k})_k$ which satisfies one of the
following properties:
\begin{itemize}
\item[(1)] (concentration) there exists a sequence $(y_k)\subset\R^n$ such that for every $\varepsilon>0$  there 
exists $ R\in (0,\infty)$ such that $\int_{B_R(y_k)}\rho_{h_k}\geq1-\varepsilon\qquad\text{for every $k\in\N$}$;
\item[(2)](vanishing) $\lim_{k\to\infty}\sup_{y\in\R^n}\int_{B_R(y)}\rho_{h_k}=0 \quad{\rm for}\quad
0<R<\infty$;
\item[(3)](dichotomy) there exist $\alpha\in(0,1)$ and two sequences $R_h\to+\infty$ and $(y_h)_h\subset\R^n$ such
that 
$\int_{B_{R_{h_k}}(y_{h_k})}\rho_{h_k}\to\alpha$ and $\int_{\R^n\setminus
B_{2R_{h_k}}(y_{h_k})}\rho_{h_k}\to1-\alpha$.

\end{itemize}
\noindent Let $u_h\in D^{p,s}(\R^n)$ be such that $\delta(u_h)\to0$. We can suppose that $\|u_h\|_q=1$. Consider 
$w_h(x-x_h)=\tau_{\lambda_h}u_h(x-x_h)$ where $\lambda_h>0$ is defined, for every $h$, as in Lemma
\ref{FeGcollegate}. Thanks to formulas \eqref{cambi}, each function $w_h$ satisfies  statements $(i)$ and $(ii)$ of
the theorem. Moreover, Lemma \ref{FeGcollegate} provides us two positive constants $\eta_0$ and $k$ such that
$F(w_h)=\eta_0G(u_h)^k$. Since $\delta(w_h)+1=G(w_h)/G$, and $\delta(w_h)$ tends to $0$, it follows that the
sequence $F(w_h)\to G^k=\varphi(1)$ as $h$ tends to $\infty$, where $\varphi$ is defined in \eqref{phiepsi}. In
particular the sequence $(w_h)_h$ must satisfy statement $(iii)$ of the theorem. In order to prove point {\em
(iv)}, we apply the Concentration-Compactness Theorem to the sequence $(|w_h|^q)_h$ aiming to exclude cases $(2)$
and $(3)$.

If the sequence vanishes, by H\"older inequality we would get vanishing also for the sequence $(|w_h|^s)_h$, 
since $s<q$. It is not difficult to see that these conditions, together to the equiboundedness of $(w_h)_h$ in
$D^{p,s}(R^n)$, guarantee that $w_h\to0$ strongly in $L^q$ as $h\to\infty$ (see for istance \cite{llc2} Lemma
$I.1$). Since $\|w_h\|_q=\|u_h\|_q=1$, we would get a contradiction. So we can exclude case $(2)$.

The dichotomy case is more complicate and requires a longer analysis. Suppose to have dichotomy for the sequence 
$(|w_h|^q)_h$. Then there exist $\alpha\in(0,1)$ and a sequence of positive numbers $R_h\to\infty$ as $h\to\infty$
such that
\[
\int_{B_{R_h}}|w_h|^q\to\alpha;\quad \int_{B_{2R_h}^c}|w_h|^q\to 1-\alpha;\quad \int_{B_{2R_{h}}\setminus 
B_{R_h}}|w_h|^q\to0.
\] 

\noindent Let $f\in C^1_c(B(0,2);[0,1])$ such that $f=1$ on $B(0,1)$ and consider 
$f_h(x)=f(x/R_h)\in C^1_c(B(0,2R_h);[0,1])$. Choose also $f$ such that $|\nabla f_h|\le C/R_h$ for some $C>0$. Then
we
have
\begin{equation*}
\begin{aligned}
F(w_h)&=\int|\nabla w_h|^p +\int|w_h|^s \\
	&=\int_{B_{R_h}}|\nabla (f_hw_h)|^p+\int_{B^{c}_{2R_h}}|\nabla ((1-f_h)w_h)|^p+\int_{B_{2R_h}\setminus
B_{R_h}}|\nabla w_h|^p \\
	&\quad+\int_{B_{R_h}}|f_hw_h |^s+\int_{B^{c}_{2R_h}}|(1-f_h)w_h |^s+\int_{B_{2R_h}\setminus B_{R_h}}|w_h|^s\\
	&=\int|\nabla (f_hw_h)|^p + \int|\nabla [(1-f_h)w_h]|^p \\
	&\quad+\int_{B_{2R_h}\setminus B_{R_h}}\Big[|\nabla w_h|^p-|\nabla (f_hw_h)|^p-|\nabla [(1-f_h)w_h]|^p\Big]
\\
	&\quad+\int\Big[|f_hw_h|^s+|(1-f_h)w_h|^s\Big] \\
	&\quad+\int_{B_{2R_h}\setminus B_{R_h}}\Big[|w_h|^s-f_h^s |w_h|^s-(1-f_h)^s |w_h|^s\Big].
\end{aligned}
\end{equation*}
\noindent Since $f_h$ takes values in $[0,1]$, the last integral is non-negative. Neglecting this quantity 
and rearranging the terms, we get
%\begin{equation*}
%\int_{B_{2R_h}\setminus B_{R_h}}\Big[|w_h|^s-f_h^s |w_h|^s-(1-f_h)^s |w_h|^s\Big]
%\geq\int_{B_{2R_h}\setminus B_{R_h}}\Big[|w_h|^s-f_h |w_h|^s-(1-f_h) |w_h|^s\Big]=0\text{.}
%\end{equation*}
%\noindent we get
\begin{equation}
\begin{aligned}
F(w_h)%&\geq\int|D[f_hw_h]|^p + \int|D[(1-f_h)w_h]|^p \\
	%&\quad-\int_{B_{2R_h}\setminus B_{R_h}}\Big[|D(fw_h)|^p+|D[(1-f_h)w_h]|^p\Big]	\\
	%&\quad+\int|f_hw_h|^s+|(1-f_h)w_h|^s \\
	&\geq F(f_hw_h) + F((1-f_h)w_h)- \epsilon(h)\\
	&\geq\varphi(\|f_hw_h\|^{q}_{q})+\varphi(\|(1-f_h)w_h\|^{q}_{q})-\epsilon(h),
\end{aligned}
\end{equation}
where 
\[
\epsilon(h)=\int_{B_{2R_h}\setminus B_{R_h}}\Big[|\nabla (f_hw_h)|^p+|\nabla [(1-f_h)w_h]|^p-|\nabla
w_h|^p\Big]\text{.}
\]
\noindent We claim that the error $\epsilon(h)$  is estimated from above by a quantity which converges to 
$0$ as $h\to\infty$. Indeed 
\begin{equation}\label{err}
\begin{aligned}
\epsilon(h)   &=\int_{B_{2R_h}\setminus B_{R_h}}\Big[|\nabla (f_hw_h)|^p+|\nabla [(1-f_h)w_h)]|^p-|\nabla
w_h|^p\Big] \\
	&= \int_{B_{2R_h}\setminus B_{R_h}}\Big[ |f_h\nabla w_h+w_h \nabla f_h|^p+|(1-f_h)\nabla w_h -w_h \nabla
f_h|^p-|\nabla w_h|^p\Big] \\
	&\leq \int_{B_{2R_h}\setminus B_{R_h}}\Big[\Big(|f_h\nabla w_h+w_h \nabla f_h| +|(1-f_h)\nabla w_h -w_h
\nabla f_h|\Big)^p-|\nabla w_h|^p\Big] \\
	&\leq \int_{B_{2R_h}\setminus B_{R_h}}\Big[\Big(|\nabla w_h|+2|w_h||\nabla f_h|\Big)^p-|\nabla w_h|^p\Big] 
    \\
	&\leq \int_{B_{2R_h}\setminus B_{R_h}}\Big[ \varepsilon C_p |\nabla w_h|^p+C_\varepsilon|\nabla
f_h|^p|w_h|^p\Big] \\
	&\leq\varepsilon C_p \sup_{h\in \N}\int|\nabla w_h|^p +C_\varepsilon \int_{B_{2R_h}\setminus
B_{R_h}}|\nabla f_h|^p|w_h|^p.
\end{aligned}
\end{equation}
\noindent where the first inequality is due to the super additivity of the map $t\mapsto t^p$ on $\R_+$, the 
second to the triangle inequality and the third one is  the Young inequality of (suitable) parameters
$\varepsilon>0$ and $C_\varepsilon$. We need to estimate the quantity
\[
g(h)=\int_{B_{2R_h}\setminus B_{R_h}}|\nabla f_h|^p|w_h|^p.
\]
If $p>s$ then interpolating the $L^p$ norm of the $w_h$'s between the $L^s$ norm and the $L^{p^\star}$ norm and 
recalling that $|\nabla f_h|\le C/R_h$,  we get
\[
\int_{B_{2R_h}\setminus B_{R_h}}|\nabla
f_h|^p|w_h|^p\le\frac{C^p}{R_h^p}\|w_h\|_{p^\star}^{p\theta}\|w_h\|_s^{p(1-\theta)}.
\]
Since we already know that $w_h$ satisfies statement $(iii)$ of the theorem, we get that $g(h)\to0$ as
$h\to\infty$. If $s\ge p$ we divide $g(h)$ into two terms:
\[
g(h)=\int_{(B_{2R_h}\setminus B_{R_h})\cap \{w_h\ge1\}}|\nabla f_h|^p|w_h|^p+\int_{(B_{2R_h}\setminus
B_{R_h})\cap\{w_h<1\}}|\nabla f_h|^p|w_h|^p=g_1(h)+g_2(h).
\]
Since $q>p$ we have
\[
g_1(h)\le \frac{C^p}{R_h^p}\int_{B_{2R_h}\setminus B_{R_h}}|w_h|^q\le\frac{C^p}{R_h^p}
\]
and so $g_1(h)\to0$ as $h\to\infty$. Moreover, by H\"older inequality of parameter $p^\star/p$, we get
\[
g_2(h)\le \frac{C^p}{R_h^p}\left(\int_{(B_{2R_h}\setminus B_{R_h})\cap\{w_h<1\}}|w_h|^{p^\star}\right)^{p/p^\star}
|(B_{2R_h}\setminus B_{R_h})|^{\frac{1}{(p/p^\star)'}}
\]
where 
\[
(p/p^\star)'=\frac{p/p^\star}{-1+p/p^\star}=n/p.
\] 
Since $p^\star>q$, we obtain
\[
g_2(h)\le (\omega_n(2^n-1))^{p/n}C^p \left(\int_{(B_{2R_h}\setminus B_{R_h})\cap\{w_h<1\}}|w_h|^{q}\right)^{p/p^\star}
\]
where $\omega_n$ is the measure of the unit ball of $\R^n$. So also $g_2$ converges to $0$ as $h\to\infty$. 
Thus, passing to the limit in \eqref{err}, first in $h\to\infty$ and then in $\varepsilon\to0$ we obtain that
$\epsilon(h)$ can be estimated from above by an infinitesimal quantity. Since $\|w_h f_h\|_{q}^q$ and
$\|(1-f_h)w_h\|_{q}^q$ converge respectively to $\lambda$ and 
$1-\lambda$, we can conclude thanks to  Lemma \ref{supadd} that
\[
\varphi(1)\ge\varphi(\lambda)+\varphi(1-\lambda)>\varphi(1)
\]
obtaining a contradiction.

\noindent So we can exclude also the dichotomy phenomenon. Since $w_h$ is equibounded in $L^q(\R^n)$, we can
consider
its weak-$L^q$ limit $w$. This is also a strong limit in $L^q$. Indeed by concentration, up to translations and
since $q>1$, we have
\[
1-\varepsilon\le \lim_{h\to\infty}
\int_{B_R}|w_h|^q=\int_{B_R}|w|^q\leq\int|w|^q\leq\liminf_{h\to\infty}\int|w_h|^q=1.
\]
This concludes the proof of the theorem.
\end{proof}

\begin{corollary}\label{continuity}
Consider a sequence $(u_h)_h\subset D^{p,s}(\R^n)$ such that $\delta(u_h)\to\infty$ as $h\to0$. Then also 
$\lambda(u_h)\to0$ for $h\to\infty$.
\end{corollary}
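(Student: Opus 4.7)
The statement to prove (with the evident typo corrected so that $\delta(u_h)\to 0$ as $h\to\infty$) asserts continuity at $0$ of $\lambda$ as a function of $\delta$. My plan is to argue by contradiction and exploit the compactness furnished by Theorem \ref{compattezza} together with the lower semicontinuity properties of the functional $G$.

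First I would suppose, towards a contradiction, that there exist $\eta>0$ and a subsequence (still denoted $(u_h)_h$) with $\lambda(u_h)\ge\eta$ for every $h$. Applying Theorem \ref{compattezza}, I obtain rescaled functions $w_h$ satisfying $\lambda(w_h)=\lambda(u_h)\ge\eta$, $\delta(w_h)=\delta(u_h)\to 0$, $\|w_h\|_q=\|u_h\|_q$ (which I may normalize to $1$), uniform bounds on $\|\nabla w_h\|_p$ and $\|w_h\|_s$, and strong convergence $w_h\to w$ in $L^q(\R^n)$ with $w\in D^{p,s}(\R^n)$. Passing, if necessary, to a further subsequence I may also assume $\nabla w_h\rightharpoonup \nabla w$ in $L^p$ and $w_h\rightharpoonup w$ in $L^s$, and pointwise a.e.\ convergence.

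The key step is to show that the limit $w$ is an optimizer for the GNS inequality. On the one hand, from the strong $L^q$-convergence, $\|w\|_q=1$, so the GNS inequality yields $G(w)\ge G\|w\|_q=G$. On the other hand, by weak lower semicontinuity of the $L^p$ norm of the gradient and of the $L^s$ norm (equivalently, by Fatou), I get
\[
G(w)=\|\nabla w\|_p^{\theta}\|w\|_s^{1-\theta}\le\liminf_{h\to\infty}\|\nabla w_h\|_p^{\theta}\|w_h\|_s^{1-\theta}=\lim_{h\to\infty}G(w_h).
\]
But $\delta(w_h)=G(w_h)/(G\|w_h\|_q)-1\to 0$ with $\|w_h\|_q=1$ forces $G(w_h)\to G$, so $G(w)\le G$. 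Hence $G(w)=G\|w\|_q$, which means $w$ saturates the GNS inequality and is therefore a minimizer.

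Once $w$ is known to be optimal, the definition of $\lambda$ gives
\[
\lambda(w_h)\le \frac{\|w_h-w\|_q^q}{\|w_h\|_q^q}=\|w_h-w\|_q^q\longrightarrow 0
\]
by the strong $L^q$-convergence, contradicting $\lambda(w_h)\ge\eta>0$. I expect the main technical point to be the verification that $w$ is a bona fide optimizer: the lower semicontinuity of $\|\nabla\cdot\|_p$ and $\|\cdot\|_s$ is standard, but one must make sure $w\not\equiv 0$ (guaranteed by $\|w\|_q=1$ from strong convergence, which in turn relies on ruling out vanishing and dichotomy in Theorem \ref{compattezza}) so that the normalization used to invoke the GNS inequality is legitimate.
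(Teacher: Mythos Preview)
Your proof is correct and follows essentially the same approach as the paper's: apply Theorem \ref{compattezza} to pass to a strongly $L^q$-convergent rescaled subsequence, use lower semicontinuity of $\|\nabla\cdot\|_p$ and $\|\cdot\|_s$ (which the paper packages as ``semicontinuity of the deficit'') to conclude that the limit $w$ is a GNS optimizer, and then read off $\lambda(w_h)\to 0$ from the definition of $\lambda$. The only differences are cosmetic---your contradiction framing and your explicit justification of the semicontinuity step (extracting weak limits in $L^p$ and $L^s$ from the uniform bounds in (iii)) spell out details the paper leaves implicit.
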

\begin{proof}
As a consequence of Theorem \ref{compattezza}, up to pass to a subsequence and to consider a rescaling of the form
$\tau_\lambda u_h(x)=\lambda^{n/q}u(\lambda x)$,  we can suppose that $u_h\to u$ strongly in $L^q(\R^n)$. 
Since the map $u\mapsto\lambda(u)$ is strongly continuous in $L^q(\R^n)$,  $\lambda(u_h)$ converges to
$\lambda(u)$. Furthermore, by the semicontinuity of the deficit (with respect to the $L^q$-convergence),
$\delta(u)=0$. Hence $u$ is optimal for \eqref{GNS} and $\lambda(u)=0$.
\end{proof}

\section{Reduction to $n$-symmetric functions}\label{n-sym}
In this section we prove that inequality \eqref{formulagenerale} holds true for $n$-symmetric functions 
(recall that a function is $k$-symmetric in $\R^n$, $n\ge k$, if it is symmetric with respect to $k$ mutually
orthogonal hyperplanes). 
We begin with a brief overview of the strategy we want to adopt. Given a function $u\in D^{p,s}(\R^n)$, the 
natural radial symmetric function to look at is its spherical rearrangement $u^\star$ (see \cite[Chapter
$3$]{liebloss}). Suppose that inequality \eqref{gnsq} holds true for
radial symmetric decreasing functions (and so for $u^\star$). Then by the triangle inequality we get
\[
\lambda(u)^{1/q}\leq \|u-u^\star\|_q +\lambda(u^\star)^{1/q}\le \|u-u^\star\|_q+\kappa_0^{1/q}\delta(u^\star)^{\a_0/q}.
\]
\noindent
In the direction of the proof of \eqref{formulagenerale}, we notice that by the P\'{o}lya-Szeg\"{o} inequality
(see for instance \cite{BZ}) we have that $\delta(u^\star)\leq\delta(u)$. 
But it is not clear if we can estimate the $L^q$ distance between $u$ and $u^\star$ in terms of $\delta(u)$. Indeed
this turns out to be true only if a function is already $n$-symmetric. We shall show, in Lemma \ref{lemma}, that
$\delta(u)$ estimates from above the P\'olya-Szeg\"o deficit, defined as
\begin{equation}\label{dps}
\delta_{PS}(u)=\frac{\|\nabla u\|_p-\|\nabla u^\star\|_{p}}{\|\nabla u^\star\|_{p}},
\end{equation}
and then, in Lemma \ref{controlloconps}, we  prove an estimate of the $L^{p^\star}$ distance between $u$ 
and $u^\star$ in terms of the $L^p$ distance between $|\nabla u|$ and $|\nabla u^\star|$. 
%That is, there exist positive constants $C_0$ and $\alpha_0$ such that, for every $u\in D^{p,s}(\R
%^n)$, with $\|u\|_q=1$, it holds 
%\begin{equation}\label{0}
%\delta_{PS}(u)\leq C_0\delta(u)^{\alpha_0}.
%\end{equation}
% The obvious question is now if $\delta_{PS}(u)$ control $\|u-u^\star\|_q$. This fact, which is false in general, turns out to be true for $n$-symmetric functions. In this direction, Lemma  \ref{lemma} and Lemma \ref{controlloconps} will provide two useful estimates.  
\begin{lemma}\label{lemma}
There exist two positive constants $\delta_0$ and $C_0$ such that for every $u\in D^{p,s}(\R^n)$ such that
$\|u\|_q=1$, with $\delta(u)\le\delta_0$, up to the rescaling \eqref{1}, we have
\begin{equation}\label{controllops}
\delta_{PS}(u)\leq C\delta(u)^{1/\theta}
\end{equation}
where $\theta\in(0,1)$ is the parameter introduced in \eqref{npqs}.
\end{lemma}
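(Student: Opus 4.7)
My plan is to apply the GNS inequality \eqref{GNS} to the spherical rearrangement $u^\star$ and to compare the outcome with the defining identity for $\delta(u)$. Since Schwarz rearrangement preserves $L^r$ norms for every $r\geq 1$, one has $\|u^\star\|_q=\|u\|_q=1$ and $\|u^\star\|_s=\|u\|_s$; applying \eqref{GNS} to $u^\star$ therefore yields $\|\nabla u^\star\|_p^\theta\|u\|_s^{1-\theta}\geq G$, while the definition of $\delta(u)$ reads $\|\nabla u\|_p^\theta\|u\|_s^{1-\theta}=G(1+\delta(u))$. Taking ratios, and invoking the P\'olya--Szeg\"o inequality $\|\nabla u^\star\|_p\leq\|\nabla u\|_p$ to know that $\delta_{PS}(u)\geq 0$, I arrive at the core comparison
\[
(1+\delta_{PS}(u))^\theta\leq 1+\delta(u),
\]
i.e.\ $\delta_{PS}(u)\leq (1+\delta(u))^{1/\theta}-1$.

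\textbf{Upgrading to the power bound.} To extract from this additive comparison the super-linear estimate $\delta_{PS}(u)\leq C_0\delta(u)^{1/\theta}$ (recall $1/\theta>1$) I would use the rescaling \eqref{1} of Theorem \ref{compattezza}: by parts (i) and (iii) of that theorem this leaves $\delta(u)$ and $\lambda(u)$ invariant while forcing $\|\nabla u\|_p$ and $\|u\|_s$ into a fixed compact subset of $(0,\infty)$, giving in particular a uniform lower bound for $\|\nabla u^\star\|_p$. For $\delta_0$ small enough, Corollary \ref{continuity} then localises $u$ in an $L^q$-neighbourhood of a radial optimiser, so that both $\delta(u^\star)$ and $\delta_{PS}(u)$ are themselves small. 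In this normalised regime I would combine the factorisation $1+\delta(u)=(1+\delta_{PS}(u))^\theta(1+\delta(u^\star))$, implicit in the core comparison, with the elementary inequality $(1+y)^\theta-1\geq c_\theta\min\{\theta y, y^\theta\}$ (valid for $y\geq 0$ and $\theta\in(0,1)$ with some $c_\theta>0$), and then discuss separately the regimes where the minimum is realised by $\theta y$ or by $y^\theta$. This converts the additive bound into $\delta_{PS}(u)^\theta\leq C\delta(u)$, which is the stated inequality after taking $\theta$-th roots.

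\textbf{Main obstacle.} The genuine difficulty is precisely this upgrade. The direct manipulation of P\'olya--Szeg\"o and GNS only delivers the linear bound $\delta_{PS}(u)\leq C\delta(u)$, which is strictly weaker than the claimed super-linear bound $\delta_{PS}(u)\leq C\delta(u)^{1/\theta}$ whenever $\delta(u)\in(0,1)$. Closing this gap without the smallness hypothesis $\delta(u)\leq\delta_0$ and the rescaling \eqref{1} appears hopeless, because one can arrange $\delta_{PS}(u)$ of the same order as $\delta(u)$ on functions staying away from optimisers; the compactness of Theorem \ref{compattezza} together with Corollary \ref{continuity} are what allow one to restrict the analysis to a neighbourhood of the radial manifold, which is the only place where the extra exponent $1/\theta-1>0$ can plausibly be extracted.
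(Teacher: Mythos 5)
Your ``core comparison'' is correct, and in fact cleaner than the paper's: after normalising $\|u\|_q=\|u^\star\|_q=1$, the multiplicative identity
\begin{equation*}
1+\delta(u)=\big(1+\delta_{PS}(u)\big)^{\theta}\big(1+\delta(u^\star)\big)\ge\big(1+\delta_{PS}(u)\big)^{\theta}
\end{equation*}
is exactly the paper's inequality \eqref{15} rewritten, and it immediately yields the linear bound $\delta_{PS}(u)\leq C\,\delta(u)$ once $\delta(u)\le\delta_0$ (since then $\delta_{PS}(u)$ is small and $(1+y)^\theta-1\sim\theta y$ for small $y$). Up to that point you are following essentially the same road as the paper.

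The gap is in your ``upgrade.'' From $(1+\delta_{PS}(u))^\theta-1\le\delta(u)$ and the elementary bound $(1+y)^\theta-1\ge c_\theta\min\{\theta y,\,y^\theta\}$ you only get $\min\{\theta\,\delta_{PS}(u),\,\delta_{PS}(u)^\theta\}\le C\,\delta(u)$. But in precisely the regime you have localised to (compactness plus $\delta(u)\le\delta_0$, hence $\delta_{PS}(u)$ small), the minimum is the \emph{linear} term $\theta\,\delta_{PS}(u)$, so this again returns $\delta_{PS}(u)\le C\,\delta(u)$; the branch $\delta_{PS}(u)^\theta\le C\,\delta(u)$, which is the one you would need, is only active when $\delta_{PS}(u)$ is large, i.e.\ outside the regime at hand. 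Neither the compactness of Theorem \ref{compattezza} nor Corollary \ref{continuity} changes the local behaviour of $y\mapsto(1+y)^\theta$ near $y=0$, which is what forces the exponent. You are right to single this out as ``the genuine difficulty,'' but your sketch does not actually close it.

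It is worth knowing that the paper's own proof has the same problem, and more visibly: it invokes the claim that there is $c>0$ with $\|\nabla u\|_p^\theta-\|\nabla u^\star\|_p^\theta\ge c\big(\|\nabla u\|_p-\|\nabla u^\star\|_p\big)^\theta$ whenever both norms lie in a fixed interval $[m,M]$. For $\theta\in(0,1)$ this is false: writing $a=\|\nabla u\|_p$, $b=\|\nabla u^\star\|_p$ and letting $a\to b^+$, the left side is $\theta b^{\theta-1}(a-b)+o(a-b)$ while the right side is $c(a-b)^\theta$, and $(a-b)^\theta$ dominates $(a-b)$; so the inequality cannot hold with a uniform $c$ for $a$ close to $b$, i.e.\ exactly when $\delta_{PS}(u)$ is small. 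What does follow from \eqref{15}, together with the uniform bounds of Theorem \ref{compattezza}, is only the linear estimate $\delta_{PS}(u)\le C\,\delta(u)$. Fortunately this suffices downstream: in the string of inequalities in the proof of Theorem \ref{t1}, replacing $\delta(u)^{\gamma/\theta}$ by $\delta(u)^{\gamma}$ merely changes the exponent to $\xi=\min\{\alpha_0,\gamma\}$, which is still a valid choice, so the Main Theorem is unaffected.
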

\begin{proof}
By Theorem \ref{compattezza}, up to rescaling it, we can suppose that $u$ satisfies properties $(i)-(iii)$ in
\eqref{1}. If we choose $\delta(u)\leq 1/G$, we obtain:
\begin{equation}\label{00}
G\leq\|\nabla u^\star\|_p^\theta\|u^\star\|_s^{1-\theta}\leq\|\nabla u\|_p^\theta\|u\|_s^{1-\theta}\leq 1+G.
\end{equation}
Then,
\begin{equation}\label{15}
\begin{aligned}
G\delta(u)&=\Big(\|\nabla u\|_p^\theta-\|\nabla u^\star\|_p^\theta\Big)\|u\|_s^{1-\theta}+G\delta(u^\star)\\
	&\geq C_1^{\theta-1}\Big(\|\nabla u\|_p^\theta-\|\nabla u^\star\|_p^\theta\Big)+G\delta(u^\star)
\end{aligned}
\end{equation}
where we used the fact that $\|u\|_q=1$ (statement $(ii)$) and $\|u\|_s\geq C_1^{-1}$ (statement $(iii)$). By \eqref{00} there exists a positive constant $c$ such that,
\[
\|\nabla u\|_p^\theta-\|\nabla u^\star\|_p^\theta\geq c\Big(\|\nabla u\|_p-\|\nabla u^\star\|_p\Big)^\theta.
\]
Now the conclusion follows from \eqref{15} and  \eqref{dps}, with $C=c^{1/\theta}$.
\end{proof}
To obtain the desired estimate of the $L^p$ distance between $u$ and $u^\star$ we shall use  the following result,
whose proof can be found in 
\cite[Theorem 3]{CFMP}.
\begin{lemma}\label{controlloconps}
Let $n\geq2$, $1<p<n$ and $z=\max\{p,2\}$. Then there exists a positive constant $C$ such that
\begin{equation}\label{mino}
\int|u-u^\star|^{p^\star}\leq C\Bigg(\int |u|^{p^\star}\Bigg)^{\frac{p}{n}}\Bigg(\int|\nabla
u^\star|^p\Bigg)^{\frac{z-1}{z}}\Bigg(\int|\nabla u|^p-\int|\nabla u^\star|^p\Bigg)^{\frac{1}{z}}
\end{equation}
holds for every non-negative $u\in W^{1,p}(\R^n)$ which is symmetric with respect to the coordinate hyperplanes.
\end{lemma}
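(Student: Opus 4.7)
My plan is to exploit a coarea/layer-cake decomposition coupled with a quantitative isoperimetric inequality adapted to the $n$-symmetric class. For non-negative $u$, set $E_t=\{u>t\}$ and $B_t=\{u^\star>t\}$, the centered ball with $|B_t|=|E_t|$. Since $u$ and $u^\star$ are equimeasurable, a layer-cake computation gives
\[
\int|u-u^\star|^{p^\star}=p^\star\int_0^\infty t^{p^\star-1}\,|E_t\triangle B_t|\,dt,
\]
so matters reduce to a pointwise (in $t$) estimate of $|E_t\triangle B_t|$ in terms of the perimeter deficit $P(E_t)-P(B_t)$. A key point is that the $n$-symmetry of $u$ is inherited by each $E_t$, so $E_t$ has its centroid at the origin and can be compared directly with $B_t$ without passing through the translate minimizing the Fraenkel asymmetry; this is what allows a sharpening over the generic Fusco-Maggi-Pratelli bound.

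The core ingredient is a quantitative isoperimetric inequality of the form
\[
|E\triangle B_E|^{z}\leq C\,|E|^{\gamma}\,\bigl(P(E)-P(B_E)\bigr)
\]
valid for $n$-symmetric sets $E$ of finite measure, with $z=\max\{p,2\}$ and a suitable dimensional exponent $\gamma$. In this symmetry class the lowest nontrivial modes in the Fuglede-type expansion of $\pa E$ around $\pa B_E$ vanish, which lets one push the exponent beyond the universal $2$ and match the $L^p$-scaling required for the Sobolev comparison. For $p\leq 2$ one is content with $z=2$, proved by the classical Fuglede expansion plus a selection/reduction principle bringing general symmetric sets into the nearly-spherical regime; for $p>2$ one repeats the argument in $L^p$, replacing $L^2$-orthogonality by a convexity estimate on the boundary parametrization.

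Inserting this bound into the layer-cake formula and applying H\"older in the $t$-variable with exponents $z$ and $z/(z-1)$ yields
\[
\int_0^\infty t^{p^\star-1}|E_t\triangle B_t|\,dt\leq C\,I_1^{(z-1)/z}\,I_2^{1/z},
\]
where $I_2=\int_0^\infty t^{p^\star-1}\bigl(P(E_t)-P(B_t)\bigr)\,dt$ is, by the coarea formula combined with a weighted rearrangement identity, comparable to $\bigl(\int|\nabla u|^p-\int|\nabla u^\star|^p\bigr)\cdot\bigl(\int|\nabla u^\star|^p\bigr)^{p-1}$ up to dimensional constants, while $I_1$ collects the radius/measure factors along the level sets. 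Feeding the distributional Sobolev bound $|E_t|\leq C\,t^{-p^\star}\int|u|^{p^\star}$ into $I_1$ collapses it to $\bigl(\int|u|^{p^\star}\bigr)^{p/n}$, producing exactly \eqref{mino}. The main obstacle is establishing the sharp quantitative isoperimetric inequality in the symmetric class with exponent $z=\max\{p,2\}$ and then tracking the exponents through coarea and H\"older so that $p/n$, $(z-1)/z$ and $1/z$ land in the precise positions of \eqref{mino}; this is where the CFMP machinery is essential, and reproving it from scratch would be the bulk of the work.
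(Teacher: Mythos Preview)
The paper does not prove this lemma; it is quoted from \cite[Theorem~3]{CFMP}, so there is no in-house argument to compare against. Your outline has the right architecture---layer-cake on level sets, a quantitative isoperimetric bound on each $E_t$, coarea to reassemble the P\'olya--Szeg\"o deficit---and this is indeed the skeleton of the CFMP proof. But several intermediate claims are incorrect as written, and the gaps are not cosmetic.

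First, the identity $\int|u-u^\star|^{p^\star}=p^\star\int_0^\infty t^{p^\star-1}|E_t\triangle B_t|\,dt$ is false for $p^\star>1$: layer-cake runs on the superlevel sets of $|u-u^\star|$, not on $E_t\triangle B_t$. Only the inequality $\le$ holds (pointwise, by superadditivity of $t\mapsto t^{p^\star}$ on $[0,\infty)$), and that is the direction you need. Second, the exponent $z=\max\{p,2\}$ cannot originate in the isoperimetric inequality: that statement is purely geometric and contains no $p$. The sharp quantitative isoperimetric bound gives exponent $2$, and $n$-symmetry only kills the translation modes---it does not improve the exponent. For $p>2$ the exponent $z=p$ is \emph{weaker} than $2$ and follows for free, so no ``$L^p$ Fuglede expansion'' is needed or meaningful here. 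The $p$ enters only when level-set perimeters are converted back into $\int|\nabla u|^p$.

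The decisive gap is your treatment of $I_2$. The claim that $\int_0^\infty t^{p^\star-1}\bigl(P(E_t)-P(B_t)\bigr)\,dt$ is comparable to the P\'olya--Szeg\"o deficit times powers of $\int|\nabla u^\star|^p$ is dimensionally inconsistent: under $u\mapsto\mu u$ the former scales like $\mu^{p^\star}$ while $\int|\nabla u|^p-\int|\nabla u^\star|^p$ scales like $\mu^{p}$, and no power of $\int|\nabla u^\star|^p$ can turn $\mu^p$ into $\mu^{p^\star}$. The genuine coarea decomposition of the P\'olya--Szeg\"o deficit carries the weight $(-\mu'(t))^{1-p}$ (with $\mu(t)=|\{u>t\}|$) together with an additional Jensen-deficit term on each level surface, not the weight $t^{p^\star-1}$. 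Matching this against the layer-cake weight and the $|E_t|$-powers from the isoperimetric estimate---via a further H\"older step with carefully tuned exponents---is exactly where $p/n$, $(z-1)/z$ and $1/z$ fall into place, and it is the technical heart of the CFMP argument. Your sketch asserts the outcome of that step without performing it; as written, the exponents do not close.
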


 We are now able to proceed with the proof of \eqref{formulagenerale} when $u$ belongs to the class of
$n$-symmetric functions.

\begin{theorem}\label{t1}
Suppose that there exist positive constants $\kappa_0$ and $\alpha_0$ such that for any radial decreasing function 
$u\in D^{p,s}(\R^n)$ inequality \eqref{gnsq} holds true. Then, there exist positive constants $\kappa_1$ and
$\alpha_1$ depending on $n,p,q$ and $s$ such that for any $n$-symmetric function $u\in D^{p,s}(\R^n)$ we have
\begin{equation}\label{formulagenerale1}
\delta(u)\ge \kappa_1\lambda(u)^{\alpha_1}.
\end{equation}
\end{theorem}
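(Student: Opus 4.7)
The plan is to compare the $n$-symmetric function $u$ with its spherical rearrangement $u^\star$, which is radial decreasing and therefore falls within the scope of the assumed stability inequality. Since $\|u^\star\|_q = \|u\|_q$, any near-optimal competitor $v$ for $\lambda(u^\star)$ is admissible in the definition of $\lambda(u)$, so the triangle inequality in $L^q$ yields
\begin{equation}\label{plantriangle}
\lambda(u)^{1/q} \le \frac{\|u - u^\star\|_q}{\|u\|_q} + \lambda(u^\star)^{1/q}.
\end{equation}
The P\'olya--Szeg\"o inequality, together with $\|u^\star\|_s = \|u\|_s$ and $\|u^\star\|_q = \|u\|_q$, gives $\delta(u^\star) \le \delta(u)$; the hypothesis applied to $u^\star$ then produces $\lambda(u^\star)^{1/q} \le (\kappa_0^{-1}\delta(u))^{1/(q\alpha_0)}$. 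The task thus reduces to controlling $\|u - u^\star\|_q$ by a positive power of $\delta(u)$.

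To this end I would first normalize: using the rescaling $\tau_\lambda$ of Lemma \ref{FeGcollegate} (which preserves $\delta$, $\lambda$ and $n$-symmetry, and commutes with spherical rearrangement) together with the scale-invariance of $\delta$ and $\lambda$ under $u \mapsto cu$, I may assume $\|u\|_q = 1$ and that $\|\nabla u\|_p$, $\|u\|_s$ are bounded from above and below by constants depending only on $n,p,q,s$, as in statement $(iii)$ of Theorem \ref{compattezza}. I then restrict attention to the small-deficit regime $\delta(u) \le \delta_0$; when $\delta(u) > \delta_0$, inequality \eqref{formulagenerale1} holds trivially since $\lambda(u) \le 2^q$ universally and $\kappa_1$ can be chosen small enough.

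In the small-deficit regime, Lemma \ref{lemma} supplies $\delta_{PS}(u) \le C\delta(u)^{1/\theta}$, equivalently $\|\nabla u\|_p^p - \|\nabla u^\star\|_p^p \le C'\delta(u)^{1/\theta}$ (using the two-sided control on $\|\nabla u^\star\|_p \le \|\nabla u\|_p$). Because $u$ is $n$-symmetric, Lemma \ref{controlloconps} applies, and, coupled with the uniform bounds on $\|u\|_{p^\star}$ (Sobolev plus the gradient bound) and on $\|\nabla u^\star\|_p$, it gives
$$\|u - u^\star\|_{p^\star} \le C''\,\delta(u)^{1/(\theta z p^\star)}.$$
I then interpolate $L^q$ between $L^s$ and $L^{p^\star}$: with weight $\mu \in (0,1]$ defined by $1/q = (1-\mu)/s + \mu/p^\star$ (admissible since $s \le q < p^\star$) and using that $\|u - u^\star\|_s$ is uniformly bounded, I obtain $\|u-u^\star\|_q \le K\,\delta(u)^{\mu/(\theta z p^\star)}$. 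Inserting this and the bound for $\lambda(u^\star)^{1/q}$ into \eqref{plantriangle} and raising to the $q$-th power yields \eqref{formulagenerale1} with $\alpha_1 = q\min\bigl(\mu/(\theta z p^\star),\, 1/(q\alpha_0)\bigr)$ and an appropriate $\kappa_1$.

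The main obstacle is the $L^{p^\star}$ comparison furnished by Lemma \ref{controlloconps}: its crucial hypothesis of symmetry with respect to the coordinate hyperplanes is precisely what $n$-symmetry provides and what a general function lacks, and this gap is exactly what forces one to isolate the intermediate class of $n$-symmetric functions rather than reducing directly to the radial case. A secondary, book-keeping, difficulty lies in tracking the chain of exponents $\theta \to 1/z \to \mu/p^\star$ to ensure $\alpha_1 > 0$; this is automatic once the relations in \eqref{npqs} are invoked.
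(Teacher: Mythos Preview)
Your argument coincides with the paper's for non-negative $u$: the same normalization, the same use of Lemma~\ref{lemma} to control the P\'olya--Szeg\"o deficit, the same appeal to Lemma~\ref{controlloconps}, and the same $L^s$--$L^{p^\star}$ interpolation (your exponent $\mu$ is exactly the $\theta$ of \eqref{npqs}).

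There is, however, a genuine gap. Theorem~\ref{t1} is stated for \emph{all} $n$-symmetric functions, with no sign restriction, whereas Lemma~\ref{controlloconps} is valid only for non-negative $u$. Your sentence ``Because $u$ is $n$-symmetric, Lemma~\ref{controlloconps} applies'' overlooks this hypothesis. When $u$ changes sign, the spherical rearrangement $u^\star$ must be understood as $(|u|)^\star$; the quantity $\|u-u^\star\|_{p^\star}$ then involves a signed function and a non-negative one, and \eqref{mino} gives you no control over it. In particular the P\'olya--Szeg\"o deficit $\|\nabla u\|_p-\|\nabla u^\star\|_p=\|\nabla|u|\|_p-\|\nabla|u|^\star\|_p$ measures how far $|u|$ is from being radial, not how far $u$ is from $|u|$, so a sign-changing $u$ with $|u|$ radial would have $\delta_{PS}(u)=0$ yet $\|u-u^\star\|_q$ bounded away from zero.

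The paper closes this gap in a separate Step~2: writing $u=u^+-u^-$, one uses Lemma~\ref{FeGcollegate} and the additivity of $F$ on functions with disjoint supports to show that $t^{\kappa/q}+(1-t)^{\kappa/q}-1\le\delta(u)$ where $t=\|u^+\|_q^q$, $1-t=\|u^-\|_q^q$ and $\kappa<q$. Concavity then forces $\min(t,1-t)\le a\,\delta(u)$, so $\|u-|u|\|_q^q\le 2^q a\,\delta(u)$. Since $\delta(|u|)=\delta(u)$ and $|u|$ is non-negative and $n$-symmetric, one applies your argument to $|u|$ and adds the correction $\|u-|u|\|_q$ via the triangle inequality. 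You should insert this reduction before invoking Lemma~\ref{controlloconps}.
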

\begin{proof}

Since $\lambda(u)\le 2^{q-1}$, if $\delta>0$ and $\delta(u)\geq \min\{\delta,1/G\}$, then we have $\lambda(u)\le
(2^{q-1}/\delta)\delta(u)$ 
and so \eqref{formulagenerale} holds true with $\kappa_1=2^{q-1}/\delta$ and $\a_1=1$. Hence we may assume  that
$\delta(u)\le\min\{\delta,1/G\}$ for a suitably small $\delta$.
 Moreover, by Theorem \ref{compattezza} we can suppose $\|u\|_q=1$ and $\|u\|_s\in[1/C_0,C_0]$ where $C_0\geq1$ is
a constant not depending on $u$. Keeping in mind these remarks, we divide the proof into two steps:

\noindent
\underline{Step 1}: We assume that $u\geq0$.
\noindent
In this case we have, by an interpolation inequality, Lemma \ref{controlloconps}, and Theorem \ref{compattezza},
\begin{equation}
\begin{aligned}\label{z}
\int|u-u^\star|^q &\leq\left(\int|u-u^\star|^s\right)^{(1-\theta)q/s}
\left(\int|u-u^\star|^{p^\star}\right)^{\theta q/p^\star} \\
		&\le C^{\frac{\theta q}{p^\star}}2^{\frac{(1-\theta)q(s-1)}{s}}\left(\int u^s\right) 
		\Bigg(\int|u|^{p^\star}\Bigg)^{\frac{p\theta q}{n
p^\star}}\Bigg(\int|\nabla u^\star|^p\Bigg)^{\frac{\theta q(z-1)}{p^\star
z}}\Bigg(\int|\nabla u|^p-\int|\nabla u^\star|^p\Bigg)^{\frac{\theta q}{p^\star z}} \\
		&\leq C(n,p,q,s)%\left(\int|Du|^p\right)^{\g_1}
		\left(\int|\nabla u|^p-\int|\nabla u^\star|^p\right)^{\g}.
\end{aligned}
\end{equation}
where $C(n,p,q,s)$ and $\g$ are suitable positive constants depending on $n,p,s$ and $q$. Notice that we used the
boundedness of the $L^p$ norm of $\nabla u$ and of the $L^s$ norm of $u$ guaranteed by Theorem \ref{compattezza}
(up
to choose $\delta$ small enough). Moreover we exploited Lemma \ref{controlloconps} and thus the assumption that
$u\ge0$. If we suppose $\delta(u)\leq\delta\leq G$, again by Theorem \ref{compattezza} we get
\[
G\leq C_2\|\nabla u^\star\|^{\theta}_{p}\leq C_2\|\nabla u\|^{\theta}_{p}=C_3G(\delta(u)+1)\leq C_3G(1+G),
\]
for suitable positive constants $C_2$ and $C_3$. Hence there exists $C_4>0$ such that
\[
\|\nabla u\|_p^p-\|\nabla u^\star\|_p^p\leq C_4(\|\nabla u\|_p-\|\nabla u^\star\|_p).
\]
\noindent By the triangle inequality, estimates  \eqref{controllops} and \eqref{z}, the P\'{o}lya-Szeg\"{o} 
inequality and the assumption on radial functions that we have as hypothesis, we can find constants $C_5$, $C_6$
and $C_7$ not depending on $u$ such that
\[
\begin{aligned}
\lambda(u)^q       &\leq2^{q-1}\left( \lambda(u^\star)+\|u-u^\star\|_q^q\right) \\
		&\leq C_5\left[\delta(u)^{\alpha_0}+\Bigg(\int|\nabla u|^p-\int|\nabla u^\star|^p\Bigg)^{\g}\right]
\\
		&\leq C_6\left[\delta(u)^{\alpha_0}+\delta(u)^{\g/\theta}]\right] \leq C_7\delta(u)^\xi,
\end{aligned}
\]
where $\xi=\min\{\a_0,\g/\theta\}$.

\underline{Step 2}: $u$ changes sign.
In this case consider the positive and the negative part of $u$: $u^+=u\chi_{\{u>0\}}$ and $u^-=-u\chi_{\{u<0\}}$ 
(where $\chi_A$ denotes the characteristic function of the set $A$).
By Lemma \ref{FeGcollegate} we are provided of a positive constant $\lambda$ such that
\[
\eta_0G(u)^\kappa=F(\tau_\lambda u).
\]
Moreover we have that
\[
F(\tau_\lambda u^\pm)\ge \inf_\mu F(\tau_\mu u^\pm)=\eta_0G(u^\pm)^\kappa.
\]
\noindent where $F$ and $G$ are defined in \eqref{FeG1}. So we get
\[%\label{formulabrutta}
\begin{aligned}
G^\kappa(\delta(u)+1)^\kappa &=\big(\|\nabla u\|^{\theta}_{p}\|u\|^{1-\theta}_{s}\big)^\kappa
		   =\frac{1}{\eta_0}\Bigg(\int|\nabla \tau_\lambda u|^p+\int|\tau_\lambda u|^s\Bigg) \\
		&=\frac{1}{\eta_0}\Bigg(\int|\nabla \tau_\lambda u^{+}+\nabla \tau_\lambda u^{-}|^p+|\tau_\lambda
u^{+}+\tau_\lambda u^{-}|^s\Bigg) \\
		&=\frac{1}{\eta_0}\Bigg(\int|\nabla \tau_\lambda u^{+}|^p+\int|\nabla \tau_\lambda
u^{-}|^p+\int|\tau_\lambda u^{+}|^s+\int|\tau_\lambda u^{-}|^s\Bigg) \\
		&\ge\big(\|\nabla u^{+}\|^{\theta}_{p}\|u^{+}\|^{1-\theta}_{s}\big)^\kappa
+\big(\|\nabla u^{-}\|^{\theta}_{p}\|u^{-}\|^{1-\theta}_{s}\big)^\kappa \\
		&\geq G^\kappa\Big[\|u^{+}\|^{\kappa}_{q} + \|u^{-}\|^{^\kappa}_{q}\Big]=
G^\kappa\Big[\|u^+\|^{\kappa}_{q}+\|u^-\|^{\kappa}_{q}\Big].
\end{aligned}
\]
The  last equality is due to the fact that $\tau_\lambda u^+$ and $\tau_\lambda u^-$ have disjoint supports while
in the last inequality we exploited 
the GNS inequality. Let us set $\int (u^+)^q=t$ and $\int (u^-)^q=1-t$. We can suppose $t\in(0,1)$, since $u$
changes sign. Then the previous formula is equivalent to
\[
f(t)=\big(t^{\kappa/q}+(1-t)^{\kappa/q}\big)^{1/\kappa}-1\leq\delta(u).
\]
The function  $f$ is symmetric in $[0,1]$, vanishes at $0$ and $1$ and since $\kappa<q$ 
(by Lemma \ref{FeGcollegate}) is concave. Thus there exists $a>0$ such that
\[
f(t)\geq \frac{1}{a}\min\{t,1-t\},
\]
\noindent so that 
\begin{equation}\label{min}
\min\Bigg\{\int (u^+)^q, \int(u^-)^q\Bigg\}\leq a\delta(u).
\end{equation}
Suppose that the minimum in \eqref{min} is achieved by $\int(u^-)^q$ (being analogous the other case). 
Since $\delta(|u|)=\delta(u)$, we can conclude, thanks to the triangle inequality and to \eqref{min}, that
\[
\lambda(u)^{1/q}\leq\lambda(|u|)^{1/q}+\Bigg(\int|u-|u||^q\Bigg)^{1/q}\leq C_8(\delta(u)^{\xi/q}+\delta(u)^{1/q})
\le \kappa_1\delta(u)^{\a/q}
\]
where $\alpha=\min\{\xi, 1\}$ and $C_8$ a positive constant not depending on $u$. The last inequality holds for
$\delta(u)<1$. So  \eqref{formulagenerale1} holds with $\a_1=\a/q$. 
\end{proof}

\section{Reduction inequalities}\label{reduc}
The goal of this section is to prove the reduction inequalities \eqref{riduzione}. Namely we shall prove the 
following result.

\begin{theorem}\label{t2}
Assume the hypotheses of Theorem \ref{t1}. Then there exist two positive constants $\kappa_2$ and $\alpha_2$ such
that for every non-negative function $u\in D^{p,s}(\R^n)$ there exists an $n$-symmetric function $\bar{u}$ such
that the following reduction inequalities hold true
\begin{equation}\label{riduzioneteo}
\lambda(u)\leq \kappa_2\lambda(\bar{u})^{\alpha_2},\qquad   \delta(\bar{u})\leq \kappa_2\delta(u)^{\alpha_2}.
\end{equation}
\end{theorem}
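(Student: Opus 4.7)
The plan is to take $\bar u$ to be the iterated Steiner symmetrization of $u$ with respect to $n$ mutually orthogonal coordinate hyperplanes: set $u^{(0)}=u$ and, for $k=1,\dots,n$, let $u^{(k)}$ be the Steiner symmetrization of $u^{(k-1)}$ with respect to $\{x_k=0\}$, so that $\bar u:=u^{(n)}$ is $n$-symmetric by construction. The second inequality in \eqref{riduzioneteo} is then immediate from the P\'olya--Szeg\H{o} principle: each Steiner step preserves every $L^r$ norm while not increasing $\|\nabla\cdot\|_p$, so $\delta(u^{(k)})\le\delta(u^{(k-1)})$ and in particular $\delta(\bar u)\le\delta(u)$. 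After restricting to $\delta(u)\le 1$ (outside which the claim holds trivially since $\lambda(u)\le 2^{q-1}$), this already yields the second inequality with $\alpha_2=\kappa_2=1$.

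For the substantive first inequality, the strategy is to control $\|u-\bar u\|_q$ by a power of $\delta(u)$ and then invoke the triangle inequality. A codimension-one CFMP-type estimate (a one-directional analogue of Lemma \ref{controlloconps}, which is the core technical input of \cite{CFMP}) applied at each Steiner step gives, with $z=\max\{p,2\}$,
\[
\|u^{(k-1)}-u^{(k)}\|_{p^\star}^{p^\star} \le C\bigl(\|\nabla u^{(k-1)}\|_p^p-\|\nabla u^{(k)}\|_p^p\bigr)^{p^\star/z},
\]
up to tame multiplicative factors of $\|u^{(k-1)}\|_{p^\star}$ and $\|\nabla u^{(k)}\|_p$, both controlled through Theorem \ref{compattezza} after the rescaling \eqref{1}. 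Summing over $k$ and bounding the total P\'olya--Szeg\H{o} deficit $\|\nabla u\|_p^p-\|\nabla\bar u\|_p^p$ by a power of $\delta(u)$ via Lemma \ref{lemma} produces $\|u-\bar u\|_{p^\star}\le C\,\delta(u)^{\eta}$, and interpolating between $L^s$ and $L^{p^\star}$ exactly as in display \eqref{z} of Theorem \ref{t1} gives $\|u-\bar u\|_q^q\le C\,\delta(u)^{\gamma_0}$ for an explicit $\gamma_0>0$. The triangle inequality applied to a $v$ nearly realizing $\lambda(\bar u)$ with $\|v\|_q=\|u\|_q$ then yields
\[
\lambda(u) \le 2^{q-1}\bigl(\lambda(\bar u)+\|u-\bar u\|_q^q/\|u\|_q^q\bigr) \le C\bigl(\lambda(\bar u)+\delta(u)^{\gamma_0}\bigr).
\]

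To recast this additive bound into the stated polynomial form $\lambda(u)\le\kappa_2\lambda(\bar u)^{\alpha_2}$, I would split according to the size of $\lambda(\bar u)$ relative to $\delta(u)^{\gamma_0}$. When $\lambda(\bar u)\ge\delta(u)^{\gamma_0}$ the first summand dominates and the inequality holds with $\alpha_2=1$; in the opposite regime, either one perturbs $\bar u$ slightly within the $n$-symmetric class to guarantee $\lambda(\bar u)\gtrsim\delta(u)^{\gamma_0}$ at negligible cost in $\delta(\bar u)$, or one uses \eqref{formulagenerale1} applied to $\bar u$ to relate $\delta(u)$ and $\lambda(\bar u)$ and rebalance the exponents. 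The main obstacle I expect is precisely this codimension-one adaptation of Lemma \ref{controlloconps} at each intermediate Steiner stage, where only $k<n$ coordinate symmetries are available, together with the delicate treatment of the regime in which $\lambda(\bar u)$ is much smaller than $\delta(u)^{\gamma_0}$. Once both are in hand, combining the two reduction inequalities of \eqref{riduzioneteo} with the $n$-symmetric stability bound \eqref{formulagenerale1} yields $\lambda(u)\le C\,\delta(u)^{\alpha_2^2/\alpha_1}$, which is the Main Theorem.
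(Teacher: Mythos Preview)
Your approach differs fundamentally from the paper's and has a genuine gap at the step you yourself flag as the main obstacle. The paper never Steiner-symmetrizes here: it builds $\bar u$ by iterated \emph{reflection}. At each stage a hyperplane $H_k$ bisecting the $L^q$-mass is chosen and the two reflected functions $u_k^{\pm}$ are formed; a concavity argument gives $\delta(u_k^{\pm})\le 2\delta(u)$, and the core of the proof (resting on the quarter-space Lemma~\ref{quarto}) is that \emph{at least one} of $u_k^{\pm}$ satisfies $\lambda(u_k^{\pm})\ge c\,\lambda(u)$. This yields an $(n-1)$-symmetric function (Proposition~\ref{n-1}); the last symmetry requires a separate diagonal-reflection construction, and it is only there, in estimate~\eqref{j}, that the hypothesis of Theorem~\ref{t1} is actually used.

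The one-directional analogue of Lemma~\ref{controlloconps} you invoke at each Steiner step is not merely unavailable---it is false as stated. Take $u=v(\cdot-t e_1)$ with $v$ a GNS optimizer and $t\ne0$: Steiner symmetrization about $\{x_1=0\}$ returns $v$, so $\|u^{(0)}-u^{(1)}\|_{p^\star}>0$ while $\|\nabla u^{(0)}\|_p^p-\|\nabla u^{(1)}\|_p^p=0$. The $n$-fold symmetry hypothesis in Lemma~\ref{controlloconps} is precisely what pins down the center and removes this translation degeneracy. The same phenomenon wrecks your additive-to-multiplicative conversion: for $u(x)=v\bigl(x-g(x_2,\dots,x_n)e_1\bigr)$ with $g$ nonconstant, the first Steiner step already produces an optimizer, so $\lambda(\bar u)=0$ while $\lambda(u)>0$, and no bound $\lambda(u)\le\kappa_2\lambda(\bar u)^{\alpha_2}$ is possible for this $\bar u$. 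Your option~(b) yields only an \emph{upper} bound on $\lambda(\bar u)$, the wrong direction; option~(a) asks to control $\delta$, which involves the gradient, under an $L^q$ perturbation, and there is no such continuity. The reflection mechanism in the paper is engineered exactly to prevent this collapse of $\lambda$ under symmetrization.
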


The proof is accomplished arguing similarly to the Sobolev case considered in \cite{CFMP}, although some technical
modifications 
are needed. We begin recalling that if $v$ is an optimal function for \eqref{GNS}, then any other optimal
function is of the form
\[
v_{a,b,x_0}(x)=av(b(x-x_0)),
\]
where $a$ and $b$ are non-null constants and $x_0\in\R^n$. We define the {\em relative asymmetry} of a function on 
an affine subspace $S$ of $\R^n$ as
\[
\lambda(u|S)=\inf_{(a,b,x_0)\in\R^2\times S}\Big\{\frac{\|u-v_{a,b,x_0}\|_q^q}{\|u\|_q^q}:\text{v optimal for 
\eqref{GNS}}, \|v_{a,b,x_0}\|_q=\|u\|_q \Big\}.
\]
\noindent The next Lemma shows that the infimum in the definition of the relative asymmetry 
is achieved.
\begin{lemma}
Let $u\in D^{p,s}(\R^n)$ and $S$ an affine space contained in $\R^n$. Then  the infimum in the definition of 
$\lambda(u)$ and $\lambda(u|S)$ are achieved.
\end{lemma}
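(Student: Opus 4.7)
The plan is to reduce the infimum to a minimization over a finite-dimensional parameter space and then to rule out, by a concentration-type argument, the possibility that a minimizing sequence escapes to the boundary of this space.

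First I would parameterize the admissible competitors: let $v$ be a fixed (radial decreasing, non-negative) optimizer for \eqref{GNS}. Any admissible $v_{a,b,x_0}$ satisfies $|a|^q b^{-n}\|v\|_q^q = \|v_{a,b,x_0}\|_q^q = \|u\|_q^q$, which determines $|a|$ as a function of $b>0$; the effective parameters are thus $(\sigma,b,x_0)\in\{\pm 1\}\times(0,\infty)\times\Omega$, where $\Omega=\R^n$ for $\lambda(u)$ and $\Omega=S$ for $\lambda(u|S)$. The function $f(\sigma,b,x_0)=\|u-v_{\sigma|a|,b,x_0}\|_q^q$ is continuous on this domain, and the claim reduces to showing that $f$ attains its infimum at an interior point.

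Next I would analyze the behavior of $f$ in the three possible degenerate regimes, namely $b_k\to 0$, $b_k\to\infty$, or $|x_k|\to\infty$ with $b_k$ lying in a compact subset of $(0,\infty)$. In each case a direct computation (using that the standard optimizer $v$ is radial decreasing and in $L^{p^\star}$, hence vanishes at infinity) shows that $v_{a_k,b_k,x_k}\to 0$ pointwise almost everywhere on $\R^n$. Combining this with the identity $\|v_{a_k,b_k,x_k}\|_q=\|u\|_q$ and applying the Brezis-Lieb lemma to the sequence $u-v_{a_k,b_k,x_k}$ (whose a.e.\ limit is $u$) yields
\[
\lim_{k\to\infty}\|u-v_{a_k,b_k,x_k}\|_q^q \;=\; \|u\|_q^q+\lim_{k\to\infty}\|v_{a_k,b_k,x_k}\|_q^q \;=\; 2\|u\|_q^q,
\]
so the asymptotic value of $f$ along any degenerate sequence equals $2\|u\|_q^q$.

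The crucial step will be to exhibit a single admissible competitor whose deviation is strictly less than $2\|u\|_q^q$, ruling out the infimum being attained only in the degenerate limit. Choosing $x_0$ to lie in a set on which $|u|$ is positive (projected onto $S$ in the relative case) and selecting $b>0$ so that $v_{|a|,b,x_0}$ has scale comparable to $u$, the pointwise strict inequality $|s-t|^q<s^q+t^q$, valid for $s,t>0$ and $q>1$, integrated over the joint set where both $u^+$ and $v_{|a|,b,x_0}$ are strictly positive, yields $\|u-v_{|a|,b,x_0}\|_q^q<2\|u\|_q^q$; the competitor built with $-|a|$ handles symmetrically the case in which $u\le 0$ on a set of positive measure. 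Therefore $\inf f<2\|u\|_q^q$, so any minimizing sequence is forced to remain in a compact subset of $\{\pm 1\}\times(0,\infty)\times\Omega$, and continuity of $f$ together with the closedness of $\Omega$ (either $\R^n$ or an affine subspace) delivers a minimizer via a standard extraction argument. The main obstacle will be precisely the strict inequality $\inf f<2\|u\|_q^q$: when $u$ changes sign it is not completely routine to pick the right competitor, and one likely has to split into the positive and negative parts of $u$ and perform the overlap estimate on the part carrying the larger $L^q$ mass.
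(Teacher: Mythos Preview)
Your strategy matches the paper's: parameterize competitors by $(a,b,x_0)$ with the constraint $|a|^q b^{-n}=\|u\|_q^q/\|v\|_q^q$, show that the infimum is strictly below $2\|u\|_q^q$, and show that any parameter sequence escaping to the boundary (that is $b\to0$, $b\to\infty$, or $|x_0|\to\infty$) forces the $L^q$ distance up to $2\|u\|_q^q$, whence compactness and continuity finish. The paper carries out the degenerate-limit analysis by splitting $\R^n$ into a ball and its complement and estimating each piece with the triangle inequality; your use of Brezis--Lieb is a clean alternative that reaches the same conclusion with less bookkeeping.

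One small technical point to tighten: for the regime $b_k\to\infty$ the bare fact that $v$ vanishes at infinity is not enough to get $v_{a_k,b_k,x_k}\to0$ a.e., because $|a_k|\sim b_k^{n/q}\to\infty$. You need a quantitative decay. Using that the radial decreasing optimizer lies in $L^s$ with $s<q$ gives $|v(y)|\le C|y|^{-n/s}$, hence $|a_k v(b_k(x-x_k))|\le C\,b_k^{\,n/q-n/s}|x-x_k|^{-n/s}\to0$; the $L^{p^\star}$ membership you invoke gives the exponent $n/q-n/p^\star>0$ and therefore goes the wrong way. With this fix your Brezis--Lieb step is fully justified. The strict inequality $\inf f<2\|u\|_q^q$ is actually easier than you suggest: since competitors come with both signs and one can dilate their support at will, a competitor overlapping the positive (or negative) part of $u$ on a set of positive measure always exists, and the pointwise inequality $|s-t|^q<|s|^q+|t|^q$ for $st>0$ does the rest.
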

\begin{proof}
Since the two cases are analogous, we show a proof just for the asymmetry. We can suppose without loss of
generality 
that $\|u\|_q=1$. Let us start observing that $\lambda(u)<2$. Indeed, if $v$ is a competitor in the definition of
$\lambda(u)$, then, up to translate the center of symmetry of $v$, we can suppose that $u$ and $v$ do not have
disjoint supports. Then
\begin{equation}
\lambda(u)\leq\int|v-u|^q=\int_{\{u>v\}}(u-v)^q+\int_{\{v>u\}}(v-u)^q<\int u^q+\int v^q=2.
\end{equation}
\noindent Let now $v_h(x)=a_hv(b_h(x-x_h))$ be a sequence of functions such that $\|v_h\|_q=1$ and
$\|u-v_h\|_q^q\to\lambda(u)$ as $h\to\infty$. We want to show that, up to subsequences,
$(a_h,b_h,x_h)\to(a,b,x_0)\subset\R^2\times\R^n$ as $h\to\infty$. We have that
\begin{equation}
1=\int|v_h|^q=a_h^q\int|v(b_h(x-x_h))|^q=\frac{a_h^q}{b_h^n}\int|v|^q=\frac{a_h^q}{b_h^n}
\end{equation}
\noindent that is $a_h^q=b_h^n$. Since $v\in L^q(\R^n)$ there exists a function $\rho(\varepsilon)$ converging to
$0$ as 
$\varepsilon\to0$ such that for each $z\in\R^n$ we have
\begin{equation}\label{aaa}
\int_{B(z,\varepsilon)}|v|^q\leq\rho(\varepsilon),\qquad\int_{B(0,1/\varepsilon)}|v|^q\geq1-\rho(\varepsilon).
\end{equation}
\noindent Set now $b_-=\liminf_{h\to\infty}b_h$ and $b_+=\limsup_{h\to\infty}b_h$. We claim that $b_->0$ and
that $b_+<\infty$. Suppose $b_-=0$; then, recalling that $v_h$ is a radial function, we have
\begin{equation}
\begin{aligned}
\int_{B(0,1/\varepsilon)}|v_h|^q&\leq\int_{B(x_h,1/\varepsilon)}|v_h|^q=\frac{a_h^q}{b_h^n}
\int_{B(0,b_h/\varepsilon)}|v(y)|^qdy\\
&=\int_{B(0,b_h/\varepsilon)}|v(y)|^qdy,
\end{aligned}
\end{equation}
\noindent and the last quantity, up to pass to a subsequence, converges to $0$ as $h\to\infty$. Thus we can
suppose 
that, for a fixed $\varepsilon$ and for  $h$ big enough, we have
\begin{equation}\label{aaaaa}
\int_{B(0,1/\varepsilon)}|v_h|^q\leq\varepsilon.
\end{equation}
\noindent So, thanks to \eqref{aaaaa} and \eqref{aaa} we have
\begin{equation*}
\begin{aligned}
\|u-v_h\|_q^q&=\int_{B(0,1/\varepsilon)}|u-v_h|^q+\int_{B(0,1/\varepsilon)^c}|u-v_h|^q \\
		&\geq\left|\Bigg(\int_{B(0,1/\varepsilon)}|u|^q\Bigg)^{1/q}-\Bigg(\int_{B(0,1/\varepsilon)}|v_h|^q\Bigg)^{1/q}\right|^q \\
	& +\Bigg|\Bigg(\int_{B(0,1/\varepsilon)^c}|v_h|^q\Bigg)^{1/q} - \Bigg(\int_{B(0,1/\varepsilon)^c}|u|^q\Bigg)^{1/q}\Bigg|^q \\
		&\geq[ (1-\rho(\varepsilon))^{1/q}-\varepsilon^{1/q}]^q+[(1-\varepsilon)^{1/q}-\rho(\varepsilon)^{1/q}]^q.
\end{aligned}
\end{equation*}
\noindent Passing to the limit in $h\to\infty$ and then in $\varepsilon\to0$ we obtain that $\lambda(u)\geq2$, that
is a 
contradiction. Suppose now that $b_+=\infty$. Then 
\begin{equation}
\begin{aligned}
\int_{B(x_h,\varepsilon)^c}|v_h|^q=&\int_{B(x_h,\varepsilon)^c}a_h^q|v(b_h(x-x_h))|^qdx =a_h^q
\int_{B(0,\varepsilon)^c}|v(b_hx)|^qdx\\
	&=\frac{a_h^q}{b_h^n}\int_{B(0,\varepsilon)^c}|v(z)|^qdz=\int_{B(0,b_h\varepsilon)^c}|v|^q
\end{aligned}
\end{equation}
\noindent and arguing as before we can suppose that $\int_{B(x_h,\varepsilon)^c}|v_h|^q\leq\varepsilon$
\noindent for $h$ big enough. By  \eqref{aaa} we get
\[
\begin{aligned}
\|u-v_h\|_q^q &=\int_{B(x_h,\varepsilon)}|u-v_h|^q+\int_{B(x_h,\varepsilon)^c}|u-v_h|^q \\
		&\geq\left|\left(\int_{B(x_h,\varepsilon)}|v_h|^q\right)^{1/q}-\left(\int_{B(x_h,\varepsilon)}|u|^q\right)^{1/q}\right|^q\\
& +\left|\left(\int_{B(x_h,\varepsilon)^c}|u|^q\right)^{1/q}-\left(\int_{B(x_h,\varepsilon)^c} |v_h|^q\right)^{1/q}\right|^q \\
		&\geq(1-\varepsilon)-\rho(\varepsilon)+(1-\rho(\varepsilon))-\varepsilon
\end{aligned}
\]
\noindent and we get, as above, a contradiction.
\noindent Suppose now that $(x_h)_h$ is not bounded and extract a subsequence (not relabelled) such that
$|x_h|\to\infty$. Then given $N>0$, if $h$ is big enough we would get $\int_{B(x_h,N)}|u|^q\le1/N$.
\noindent If we choose $N$ such that $\int_{B(x_h,N)}|v_h|\geq\varepsilon$ for all $h\in\N$, we obtain
\[
\begin{aligned}
\|u-v_h\|_q^q&\geq\int_{B(x_h,N)} |u-v_h|^q+\int_{B(x_h,N)^c}|u-v_h|^q \\
		&\geq\Bigg|\Bigg(\int_{B(x_h,N)}|v_h|^q\Bigg)^{1/q}-\Bigg(\int_{B(x_h,N)}|u|^q\Bigg)^{1/q}\Bigg|^q \\
	&+\Bigg|\Bigg(\int_{B(x_h,N)^c}|u|^q\Bigg)^{1/q}-\Bigg(\int_{B(x_h,N)}|v_h|^q\Bigg)^{1/q}\Bigg|^q \\
		&\geq \left[(1-\varepsilon)^{1/q}-\frac{1}{N^{1/q}}\right]^q+\left[\left(1-\frac{1}{N}\right)^{1/q}-\varepsilon^{1/q}\right]^q.\end{aligned}
\]
and again we get a contradiction.
\end{proof}
  Clearly the asymmetry of a function estimates from below its relative asymmetry. But if we consider a
$n$-symmetric function, also the opposite estimate is true, as shown in next lemma.
\begin{lemma}\label{asrel}
Let $u\in D^{p,s}(\R^n)$ $k$-symmetric with respect to $k$ orthogonal hyperplanes and let $S$ be the intersection of such hyperplanes. Then
\[
\lambda(u|S)\leq3^q\lambda(u).
\]
\end{lemma}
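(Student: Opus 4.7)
I will use a reflection argument centered on the involution through $S$. Let $R_i$ denote the reflection through the hyperplane $H_i$ and put $R:=R_1\circ\cdots\circ R_k$. Since the $H_i$ are mutually orthogonal, $R$ is the orthogonal involution of $\R^n$ whose fixed set is exactly $S$; for every $y\in\R^n$ the midpoint $(y+Ry)/2$ is the orthogonal projection of $y$ onto $S$ and $|y-Ry|=2\,\mathrm{dist}(y,S)$. Since $u$ is invariant under each $R_i$, it is invariant under $R$.

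By the previous lemma, the infimum defining $\lambda(u)$ is attained at some $v_{a,b,x_0}$ with $\|v_{a,b,x_0}\|_q=\|u\|_q$. The change of variable $x\mapsto Rx$, together with the radiality of $v$, yields $\|u-v_{a,b,Rx_0}\|_q=\|u-v_{a,b,x_0}\|_q$, and the triangle inequality gives
\[
\|v_{a,b,x_0}-v_{a,b,Rx_0}\|_q\leq 2\|u-v_{a,b,x_0}\|_q.
\]
Set $p:=(x_0+Rx_0)/2\in S$, so that $|x_0-p|=\tfrac{1}{2}|x_0-Rx_0|$ and $\|v_{a,b,p}\|_q=\|u\|_q$. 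The key step is then the comparison
\[
\|v_{a,b,x_0}-v_{a,b,p}\|_q\leq \|v_{a,b,x_0}-v_{a,b,Rx_0}\|_q.
\]
Since (by P\'olya--Szeg\"o) one may take $v$ radial and non-increasing, both sides of the last inequality depend only on the Euclidean length of the translation between the two centers, and the comparison is equivalent to the assertion that $t\mapsto\|v-v(\cdot-te)\|_q$ is non-decreasing on $[0,\infty)$ for every unit vector $e$. Granted this, one more triangle inequality yields $\|u-v_{a,b,p}\|_q\leq 3\|u-v_{a,b,x_0}\|_q$, and since $p\in S$ the definition of $\lambda(u|S)$ delivers $\lambda(u|S)\leq 3^q\lambda(u)$.

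The main obstacle is the monotonicity just invoked. After recentering the integral (replacing $x$ by $x+(t/2)e$), it reduces to showing that
\[
t\mapsto \int_{\{x\cdot e>0\}}\bigl[v(x-\tfrac{t}{2}e)-v(x+\tfrac{t}{2}e)\bigr]^q\,dx
\]
is non-decreasing in $t\geq 0$ for $v$ radial non-increasing. The claim is elementary when $v=\chi_{B_r}$, where it follows from the monotonicity in $t$ of the volume of the symmetric difference $B_r\triangle (B_r+te)$, and the general case can be reduced to this one through a layer-cake / rearrangement argument exploiting that $v$ is a superposition of characteristic functions of concentric balls. Everything else in the proof is two triangle inequalities and the $R$-invariance of $u$.
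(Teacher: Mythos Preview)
Your argument is essentially identical to the paper's: both reflect the optimal center $x_0$ through $S$ (your $Rx_0$, the paper's $y$), use the $R$-invariance of $u$ to see that the reflected center also attains $\lambda(u)$, invoke the monotonicity of $t\mapsto\|v-v(\cdot-te)\|_q$ for radial non-increasing $v$ to pass to the midpoint/projection $p\in S$, and finish with two triangle inequalities. The paper simply asserts the monotonicity step with the phrase ``since the minima of the asymmetry are radial symmetric functions with decreasing profile''; you at least flag it as the main obstacle.

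One remark on your sketch of that step: the layer-cake reduction to balls is clean for $q=1$, but for $q>1$ the $q$-th power does not distribute over the superposition $v=\int_0^\infty\chi_{B_{r(s)}}\,ds$, and in fact the integrand $\chi_{B_r}(x-(t/2)e)-\chi_{B_r}(x+(t/2)e)$ is \emph{not} pointwise monotone in $t$. A direct argument that works for all $q\ge 1$ is a two-point rearrangement: for $0\le s\le t$ reflect through the hyperplane $\{x\cdot e=(s+t)/2\}$ (which swaps $se$ and $te$) and reduce to the pointwise convexity inequality $|a-c|^q+|b-d|^q\le |a-d|^q+|b-c|^q$ whenever $a\ge b$ and $c\ge d$.
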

\begin{proof}
Suppose as usual that $\|u\|_q=1$. Let $v_{a,b,x}$ a minimum for $\lambda(u)$. We consider now the orthogonal
projection $x_S$ of $x$ on $S$ and $y$ the symmetric point of $x_S$ with respect to $S$. 
Notice that since $u$ is symmetric with respect to $S$, also $v_{a,b,y}$ is a minimum for $\lambda(u)$. Moreover,
since the minima of the asymmetry are radial symmetric functions with decreasing profile, we have
\[
\|v_{a,b,x}-v_{a,b,x_S}\|_q\leq\|v_{a,b,x}-v_{a,b,y}\|_q.
\]
This observation and the triangle inequality imply that
\[
\begin{aligned}
\lambda(u)^{1/q}&\leq\|u-v_{a,b,x_S}\|_q\leq \|u-v_{a,b,x}\|_q+\|v_{a,b,x}-v_{a,b,x_S}\|_q \\
		&=\lambda(u)^{1/q}+\|v_{a,b,x}-v_{a,b,x_S}\|_q\leq\lambda(u)^{(1/q)}+\|v_{a,b,x}-v_{a,b,y}\|_q\\
		&\leq \lambda(u)^{1/q}+\|v_{a,b,x}-u\|_q^q+\|u-v_{a,b,y}\|_q=3\lambda(u)^{1/q}
\end{aligned}
\]
and the conclusion follows.
\end{proof}
\noindent The next result shows that the $n$-symmetry condition in the previous lemma can in some sense be relaxed.
\begin{lemma}\label{asrel2}
Consider a function $u\in L^q(\R^n)$, $u\ge0$, $H$ an 
hyperplane of $\R^n$ and $H^+$ and $H^-$ the half spaces having $H$ as boundary. Suppose that 
\[
\int_{H^+}|u|^q=\int_{H^-}|u|^q=\frac{1}{2}\int_H|u|^q,
\]
then
\begin{equation}\label{B}
\lambda(u|H)\leq C_0\lambda(u)^{1/q},
\end{equation}
with a constant $C_0$ depending only on $q$ and $n$.  
Moreover, if $T_H$ denotes the reflection with respect to $H$ of $\R^n$, it holds
\begin{equation}\label{b}
\int|u\circ T_H-u|^q\leq C_0\|u\|_q^q\lambda(u)^{1/q}.
\end{equation}
\end{lemma}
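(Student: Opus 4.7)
My plan is to construct an explicit competitor for $\lambda(u|H)$ from an optimizer for $\lambda(u)$ by orthogonally projecting its center of symmetry onto $H$. I would first normalize $\|u\|_q = 1$ and, by the preceding lemma, pick an optimizer $v = v_{a,b,x}$ for $\lambda(u)$; write $v(y) = a\,v_0(b(y - x))$ with $v_0$ the fixed radial minimizer. Let $x_H$ be the orthogonal projection of $x$ onto $H$, $d = |x - x_H|$, $h = x - x_H$, and set $\tilde v := v_{a,b,x_H}$. Then $\|\tilde v\|_q = 1$, $\tilde v$ is admissible for $\lambda(u|H)$, and $\tilde v \circ T_H = \tilde v$ since $x_H \in H$.

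The core of the argument is the estimate $\|v - \tilde v\|_q^q \leq C\lambda(u)$. The first step is to transfer the mass-balance hypothesis on $u$ into a quantitative mass-symmetry of $v$: by the $L^q(H^+)$ triangle inequality and the elementary bound $|A^q - B^q| \leq q(A \vee B)^{q-1}|A-B|$,
\[
\Bigl|\int_{H^+}|v|^q - \tfrac{1}{2}\Bigr| \leq C\,\lambda(u)^{1/q}.
\]
Writing $\int_{H^+}|v|^q = P(-bd)$ with $P(t) := \|v_0\|_q^{-q}\int_{\{\nu \cdot z > t\}}|v_0|^q\,dz$ and $\nu$ the unit normal to $H$ pointing into $H^+$, the map $P$ is $C^1$, strictly decreasing, with $P(0) = 1/2$ and $P'(0) = -\|v_0\|_q^{-q}\int_H|v_0|^q\,d\mathcal{H}^{n-1} < 0$; so its inverse is Lipschitz at $1/2$ and I deduce $bd \leq C\,\lambda(u)^{1/q}$ whenever $\lambda(u)$ is sufficiently small. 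Then the change of variables $z = b(y - x)$ together with the standard translation estimate in $L^q$ yields
\[
\|v - \tilde v\|_q^q = \|v_0\|_q^{-q}\int|v_0(z + bh) - v_0(z)|^q\,dz \leq \|v_0\|_q^{-q}(bd)^q\|\nabla v_0\|_q^q \leq C\,\lambda(u).
\]

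Both conclusions follow by triangle-type inequalities. For \eqref{B}, $\|u - \tilde v\|_q^q \leq 2^{q-1}(\lambda(u) + \|v - \tilde v\|_q^q) \leq C\lambda(u)$, so $\lambda(u|H) \leq C\lambda(u) \leq C\lambda(u)^{1/q}$ when $\lambda(u) \leq 1$, while the case $\lambda(u) \geq \delta_0$ is immediate since $\lambda(u|H) \leq 2$. For \eqref{b}, the $T_H$-invariance of $\tilde v$ and the isometry property of $T_H$ give
\[
\|u - u \circ T_H\|_q \leq \|u - \tilde v\|_q + \|\tilde v \circ T_H - u \circ T_H\|_q = 2\|u - \tilde v\|_q,
\]
and raising to the $q$-th power produces the bound. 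The main obstacle is the quantitative translation estimate $\|v_0(\cdot + h) - v_0\|_q \leq |h|\,\|\nabla v_0\|_q$, which needs $\nabla v_0 \in L^q(\R^n)$: this does not follow directly from the GNS inequality (which only gives $\nabla v_0 \in L^p$), but can be extracted from the regularity theory for the Euler--Lagrange equation satisfied by $v_0$ together with the decay of radial minimizers at infinity.
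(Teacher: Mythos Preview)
Your approach is correct in outline but differs substantially from the paper's, and it leans on a regularity fact that is not established here.

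The paper also projects the center of the optimizer $v_0=v_{a,b,x_0}$ onto $H$ to get $\bar v=v_{a,b,\bar x}$, and also reduces \eqref{B} to bounding $\int|v_0-\bar v|^q$. But instead of your analytic route (invert the mass function $P$ to control $bd$, then use the $L^q$ translation estimate), the paper uses only the \emph{radial monotonicity} of optimizers. Writing $K^\pm=H^\pm+(x_0-\bar x)$, one has $v_0\ge\bar v$ pointwise on $K^+$, hence $|v_0-\bar v|^q\le v_0^q-\bar v^q$ there; integrating and using the mass-balance hypothesis gives $\int_{K^+}|v_0-\bar v|^q\le \tfrac12-\int_{H^-}v_0^q\le C\lambda(u)^{1/q}$. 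A similar telescoping handles $H^+\setminus K^+$, and a symmetry observation shows $\int|v_0-\bar v|^q\le 2\int_{H^+}|v_0-\bar v|^q$. This yields $\int|v_0-\bar v|^q\le C\lambda(u)^{1/q}$, and the rest is as you wrote. The argument for \eqref{b} is essentially the same as yours.

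What each approach buys: your route, \emph{if} it goes through, actually gives the stronger intermediate bound $\|v-\tilde v\|_q^q\le C\lambda(u)$ (hence $\lambda(u|H)\le C\lambda(u)$), whereas the paper only gets $C\lambda(u)^{1/q}$. The paper's route, on the other hand, needs nothing about the optimizer beyond radial monotonicity and $\|v_0\|_q<\infty$; in particular it completely sidesteps the question of whether $\nabla v_0\in L^q$.

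That question is the genuine gap in your proposal. The translation estimate $\|v_0(\cdot+h)-v_0\|_q\le |h|\,\|\nabla v_0\|_q$ is exactly what you need, and you correctly flag that $\nabla v_0\in L^q$ does not follow from $v_0\in D^{p,s}$. Your suggested fix (Euler--Lagrange regularity plus decay) is plausible but is not proved in this paper, is parameter-dependent, and for the full range \eqref{npqs} is not a one-line remark: the minimizers are not explicit, and the required decay of $v_0'$ at infinity has to be extracted from the quasilinear ODE. As written, this step is an assertion rather than an argument, so the proof is incomplete. If you want to keep your strategy, you should either supply a self-contained proof that $\nabla v_0\in L^q$ for all admissible $(n,p,q,s)$, or replace the translation estimate by the paper's monotonicity argument, which closes the proof without any extra regularity.
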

\begin{proof}
Suppose without loss of generality that $\|u\|_q=1$ and let $v_0=v_{a,b,x_0}$ a minimum for $\lambda(u)$ centred
at $x_0$. 
Suppose moreover that $x_0\in H^+$, being the other case analogous, and let $\bar{x}$ the projection of $x_0$ on
$H$ and $\bar{v}=v_{a,b,\bar{x}}$. Then
\begin{equation}\label{bb}
\lambda(u|H)\leq\int|u-\bar{u}|^q\leq2^{q-1}\Bigg(\lambda(u)+\int|v_0-\bar{v}|^q\Bigg).
\end{equation}
Consider the translated half spaces $K^{\pm}=H^\pm+(x_0-\bar{x})$. Since $x_0\in H^+$ it follows that $K^+\subseteq H^+$ and $H^-\subseteq K^-$. We have that
\[
\frac{1}{2}=\int_{K^\pm}v_0^q=\int_{H^\pm}u^q=\int_{H^\pm}\bar{v}^q
\]
and 
\[
\int_{H^-}|v_0-\bar{v}|^q=\int_{K^+}|v_0-\bar{v}|^q\leq\int_{H^+}|v_0-\bar{v}|^q;
\]
hence
\begin{equation}\label{bbb}
\int|v_0-\bar{v}|^q\leq2\int_{H^+}|v_0-\bar{v}|^q.
\end{equation}
Since $v_0\geq\bar{v}$ on $K^+$ we get that $|v_0-\bar{v}|^q\leq v_0^q-\bar{v}^q$ on $K^+$. Then
\begin{equation}\label{bbbb}
\begin{aligned}
\int_{K^+}|v_0-\bar{v}|^q &\leq\int_{K^+}v_0^q-\int_{K^+}\bar{v}^q=\frac{1}{2}-\int_{H^-}v_0^q \\
	&\leq C\Big(\|u\|_{L^q(H^-)}-\|v_0\|_{L^q(H^-)}\Big)\leq C\|u-v_0\|_q=C\lambda(u)^{1/q},
\end{aligned}
\end{equation}
for a suitable positive constant $C$. Moreover
\[
\begin{aligned}
\int_{H^+\setminus K^+}|v_0-\bar{v}|^q&\leq2^{q-1}\int_{H^+\setminus K^+}(v_0^q+\bar{v}^q)=2^q\int_{H^+\setminus K^+}v_0^q \\
	&=2^q\Bigg[\int_{H^+}v_0^q-\frac{1}{2}\Bigg]=2^q\Bigg[\int_{H^+}v_0^q-\int_{H^+}u^q\Bigg]
\end{aligned}
\]
and reasoning as in \eqref{bbbb} we obtain
\begin{equation}\label{bbbbb}
\int_{H^+\setminus K^+}|v-\bar{v}|^q\leq C\lambda(u)^{1/q}.
\end{equation}
Inequality \eqref{B} is then a consequence of \eqref{bb}, \eqref{bbb}, \eqref{bbbb}, and \eqref{bbbbb}.
We are left to show inequality \eqref{b}. Let $\hat{u}$ be the optimal function in the definition of
$\lambda(u|H)$. Then
\[
\begin{aligned}
\int_{H^\pm}|u\circ T_H-u|^q&\leq2^{q-1}\Bigg(\int_{H^\pm}|u\circ T_H-\hat{u}|^q+\int_{H^\pm}|u-\hat{u}|^q\Bigg) \\
	&=2^{q-1}\int|u-\hat{u}|^q=2^{q-1}\lambda(u|H)\leq C_0\lambda(u)^{1/q}.
\end{aligned}
\]
\end{proof}

Before passing to the proof of Theorem \ref{t2} we need another technical lemma which, roughly speaking, states
that if two optimal functions for the GNS inequality are near in $L^q$ norm, then their $L^q$ distance on the whole
$\R^n$ can be estimated from above  by their $L^q$ distance on just a {\em quarter} of $\R^n$. Its proof is quite
technical but it is
essentially based on a Taylor expansion.

\begin{lemma}\label{quarto}
Let $u$ be an optimal function for the GNS  inequality of parameters $s,q,p$ centred in $0$ with $\|u\|_q=1$, 
and set $u_{\alpha,z}(x)=\alpha^{n/q}u(\alpha(x-z))$ (for simplicity, $u_{1,z}=u_z$). Consider two orthogonal half
spaces $H$ and $K$ containing the origin on their boundaries. There exist two constants $K=K(n,s,q,p)>0$ and
$\tilde{\rho}<<1$ such that if
\[
\int|u_{\lambda,x_0}-u_{\mu,y_0}|^q\le\tilde{\rho},
%\quad \int_{H\cap K}|u_{\lambda,x_0}|^q\ge \frac18\quad \int_{H\cap K}|u_{\mu,y_0}|^q\ge \frac18
\] 
then
\begin{equation}\label{th}
\int_{H\cap K}|u_{\lambda,x_0}-u_{\mu,y_0}|^q\ge K\int|u_{\lambda,x_0}-u_{\mu,y_0}|^q.
\end{equation}

\end{lemma}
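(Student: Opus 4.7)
My plan is to use the affine invariance of both integrals to reduce to comparing $u$ with a nearby perturbation $u_{\nu,z}$, and then Taylor expand around $(\nu,z)=(1,0)$. After normalizing via the change of variables $y=\lambda(x-x_0)$ (which preserves both the total and the quarter integral), I would reduce to the case $\lambda=1$, $x_0=0$, so that the comparison is now between $u$ and $u_{\nu,z}$ with $\nu=\mu/\lambda$, $z=\lambda(y_0-x_0)$; the radial symmetry of $u$ further lets me rotate so that the two orthogonal half-spaces become the coordinate half-spaces $H=\{y_1\ge 0\}$, $K=\{y_2\ge 0\}$. The smallness hypothesis $\int|u-u_{\nu,z}|^q\le\tilde\rho$ then forces $(\nu,z)$ into an arbitrarily small neighbourhood of $(1,0)$, since $(\nu,z)\mapsto u_{\nu,z}$ is proper into $L^q(\R^n)$: along any diverging sequence $(\nu_h,z_h)$ the bubble either vanishes, concentrates at a point, or escapes to infinity, and in each case the $L^q$ distance to $u$ stays bounded away from $0$.

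In this small-parameter regime I would use the Taylor expansion
\[
u-u_{\nu,z} = -(\nu-1)\phi_0 - z\cdot\vec\phi + R, \qquad \phi_0(x)=\tfrac{n}{q}u(x)+x\cdot\nabla u(x), \quad \vec\phi(x)=-\nabla u(x),
\]
with remainder $\|R\|_q = O(|\nu-1|^2+|z|^2)$, and similarly in $L^q(H\cap K)$. This requires the smoothness and polynomial decay of the radial profile $U$ of $u$, so that $\phi_0$, the components $\phi_i=-\partial_i u$ of $\vec\phi$, and the second derivatives of $(\nu,z)\mapsto u_{\nu,z}$ all lie in $L^q(\R^n)$; these follow from the Euler-Lagrange equation satisfied by any GNS optimizer.

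The heart of the matter is then the linear inequality: for every $(a,b)\in\R\times\R^n$, setting $\psi:=a\phi_0+b\cdot\vec\phi$,
\[
\int_{H\cap K}|\psi|^q \ge c_0\int|\psi|^q,
\]
for some $c_0=c_0(n,p,q,s)>0$. Both sides are continuous and $q$-homogeneous in $(a,b)\in\R^{n+1}$, so by compactness of the unit sphere this reduces to showing $\int_{H\cap K}|\psi|^q>0$ whenever $(a,b)\ne 0$, namely the linear independence of $\phi_0,\phi_1,\dots,\phi_n$ in $L^q(H\cap K)$. Writing $\psi(y)=a\Phi_0(|y|)-U'(|y|)(b\cdot y)/|y|$ with $\Phi_0(r)=\tfrac{n}{q}U(r)+rU'(r)$, if $\psi$ vanishes on $H\cap K$ then at each $r>0$ with $U'(r)\ne 0$ the linear functional $\omega\mapsto b\cdot\omega$ is constant on an open subset of $\S^{n-1}$, forcing $b=0$; the residual equation $a\Phi_0\equiv 0$ then forces $a=0$, since the alternative $\Phi_0\equiv 0$ would yield $U(r)=Cr^{-n/q}\notin L^q(\R^n)$.

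Combining the two steps, for $(\nu,z)$ close enough to $(1,0)$ the Taylor remainder $R$ becomes negligible compared to the linear part $-(\nu-1)\phi_0-z\cdot\vec\phi$ in both $L^q(\R^n)$ and $L^q(H\cap K)$, and \eqref{th} follows with any $K<c_0$. The main obstacle will be the linear-independence / unique-continuation step and keeping the Taylor remainder estimate uniform as $(\nu,z)\to(1,0)$; by contrast the compactness and homogeneity arguments are routine.
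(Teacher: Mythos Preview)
There is a genuine gap in your reduction step. The change of variables $y=\lambda(x-x_0)$ does \emph{not} preserve the quarter integral: under this substitution the region $H\cap K$ in the $x$-variable becomes $\lambda\bigl((H\cap K)-x_0\bigr)$ in the $y$-variable, a quarter-space whose edge passes through $-\lambda x_0$ rather than the origin. Since $u$ is radial about $0$, a subsequent rotation about the origin cannot bring this shifted quarter back to a coordinate quarter through $0$. In fact the lemma as literally stated is false without a further constraint on $x_0,y_0$: take $\lambda=\mu=1$ and $|x_0-y_0|$ small but both centres deep inside the opposite quarter $(-H)\cap(-K)$; then the left side of \eqref{th} can be made arbitrarily small relative to the right. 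What rescues the statement (and what the application to Proposition~\ref{n-1} actually supplies) is that $x_0\in\partial H$ and $y_0\in\partial K$. With this, one may scale by $\mu$ (half-spaces through the origin are scale-invariant) and translate along $\partial H\cap\partial K$ (which also preserves $H$ and $K$); the smallness hypothesis then forces both rescaled centres into an $o(1)$-neighbourhood of the origin, leaving only a small residual shift of the quarter that must be controlled uniformly --- e.g.\ by noting that $\int_{(Q-x_0')\triangle Q}|\psi|^q\to 0$ uniformly over the unit sphere in $(a,b)$, since $\phi_0,\nabla u\in L^q$.

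Once the reduction is repaired, the remainder of your plan --- Taylor expansion of $u-u_{\nu,z}$ followed by the compactness/linear-independence argument on the unit sphere in $(a,b)\in\R^{n+1}$ --- is correct and considerably cleaner than the paper's route. The paper also linearises, writing the first-order part as $E_{x_0,y_0,l}(x)=|\nabla u(x)|\langle x/|x|,y_0-x_0\rangle+l\bigl(|x||\nabla u(x)|+\tfrac{n}{q}u(x)\bigr)$, but then argues by contradiction along a sequence $(x_h,y_h,l_h)\to 0$, splitting into the three regimes $|v_h|\ll l_h$, $l_h\ll|v_h|$, $l_h\simeq|v_h|$ (where $v_h=y_h-x_h$), and tracking the shift of the quarter via an auxiliary family of annular sets $\Sigma(k,y_0)\subset S_k\cap(S_k+y_0)$ on which $|\nabla u|$ stays bounded below. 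Your single nonvanishing statement on the sphere replaces all of this case analysis, and your verification that $\phi_0,\partial_1 u,\dots,\partial_n u$ are linearly independent in $L^q(H\cap K)$ --- via the angular argument forcing $b=0$, and the observation that $\Phi_0\equiv 0$ would give $U(r)=Cr^{-n/q}\notin L^q$ --- is sound.
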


\begin{proof}
Up to a rotation we can consider $H=\{e_1=0\}$ and $K=\{e_2=0\}$. So that $Q:=H\cap K=\{e_1\ge0,e_2\ge0\}$.
Let us define
\[
T_k=\{x: 1/k<|\nabla u(x)|< M\} \qquad M=\max_{\R^n}|\nabla u(x)|.
\]
Since $u$ is a radial function we have that each $T_k$ is a radial set composed of a countable union of centred
annuli, i.e. there exists a non-decreasing sequence of positive numbers $(r_{k,j})_j$ such that
\[
T_k=\bigcup_{j\in\N}\left(B_{r_{k,j+1}}\setminus B_{r_{k,j}}\right) 
\] 
where $B_r$ is the ball centred at the origin of radius $r$.
Let now $I_k=\{j\in\N:r_{k,j+1}-r_{k,j}\ge1/k\}$ and set
\[
S_k=\bigcup_{j\in I_k}\left( B_{r_{k,j+1}}\setminus B_{r_{k,j}}\right).
\]
We consider, for $z\in\R^n$, the biggest centred annulus contained in $S_k\cap(S_k+z)$:
\[
\Sigma(k,z)=\left\{x\in S_k\cap(S_k+z):\partial B_{|x|}\subseteq S_k\cap(S_k+z)\right\}.
\]
Notice, that $\Sigma(k,z_1)=\Sigma(k,z_2)$ whenever $|z_1|=|z_2|$ so we may define $\Sigma(k,r)=\Sigma(k,z)$ if $|z|=r$.
\noindent
Clearly $\bigcup_{k\in\N} S_k=\R^n$. Moreover, since $|\nabla u|$ is continuous, we have that 
\begin{equation}\label{=}
\bigcup_{|z|>0}\Sigma(k,z)=S_k.
\end{equation}
Indeed, if $x\in S_k$, there exists $r=r(x)>0$ such that $|\nabla u|(y)> 1/k$ for every $y\in B(x,r(x))$, that is
$\overline{B(x,r(x))}\subseteq S_k$. 
Thus it is sufficient to choose $|z|<\mathrm{dist}(x,\partial S_k)$ and we get that $\partial B_{|x|}\subseteq
S_k\cap (S_k+z)$, i.e $x\in \Sigma(k,z)$.  
Bearing these definitions in mind, we pass to prove inequality \eqref{th}. Up to a change of variables we
can consider just the case $\lambda=1+l>1$, $\mu=1$. 
\noindent
For any Borel set $A$ we have:

\begin{equation}
\begin{aligned}
&\int _A |\lambda^{n/q}u(\lambda(x-x_0))-u(x-y_0)|^q= \int_{A+y_0} |(1+l)^{n/q}u((1+l)(x+y_0-x_0))-u(x)|^q\\
	&=\int_{A+y_0}\Big|(1+l)^{n/q}\left[ u(x)+\big\langle \nabla
u(x),lx+(1+l)(y_0-x_0)\big\rangle\right]-u(x)+R(x)\Big|^q\\
&=\int_{A+y_0}\Big|\langle \nabla u(x),lx+(1+l)(y_0-x_0)\rangle+\frac nq l\langle
\nabla u(x),lx+(1+l)(y_0-x_0)\rangle+\frac nq lu(x)+R(x)\Big|^q\\
&=\int_{A+y_0} \left||\nabla u(x)|	\left\langle \frac{x}{|x|},y_0-x_0\right\rangle+ l\left[|x||\nabla
u(x)|+\frac nq u(x)\right]+R(x)\right |^q:=\int_{A+y_0}\left| E_{x_0,y_0,l}(x)+R(x)\right|^q
\end{aligned}
\end{equation}
where the last inequality is due to the radial symmetry of $\nabla u$ and the error term $R(x)$ is given by
\[
R(x)=(1+l)^{n/q}\left[\frac{|\nabla ^2u(x)|}{|x|^2}(x\otimes
x)(lx+(1+l)(y_0-x_0))^2\right]=O(l^2)+O(|lx+(1+l)(y_0-x_0)|^2).
\]
Notice that there exists $\tilde\rho_1$ such that for every $x\in S_k$
\begin{equation}
|R(x)|\le\frac12| E_{x_0,y_0,l}(x)|
\end{equation}
if $|x_0|+|y_0|+l\le \tilde\rho_1$, since $R$ is an infinitesimal of higher order than $E_{x_0,y_0,l}$.
\noindent We aim now to find $\tilde\rho$ such that, for $|x_0|+|y_0|+l\le\tilde\rho$, the following chain of 
inequalities holds true:
\begin{equation}\label{chain}
\int \left|E_{x_0,y_0,l}+R\right|^q\le c_1 \int_{\Sigma_{k,y_0}}|E_{x_0,y_0,l}+R|^q\le c_2\int_{\Sigma_{k,y_0}\cap Q}|E_{x_0,y_0,l}+R|^q\le c_2\int_Q|E_{x_0,y_0,l}+R|^q
\end{equation}
for suitable constants $c_1$, $c_2$ and $k$. This would immediately imply inequality \eqref{th}. We begin remarking
that 
\[
\lim_{k\to\infty}\int_{S_k}|E_{x_0,y_0,l}+R|^q=\int| E_{x_0,y_0,l}+R|^q
\]
so there exists $\overline k$ such that for $k\ge \overline k$ we have
\[
\int_{S_k}|E_{x_0,y_0,l}+R|^q\ge\frac12\int|E_{x_0,y_0,l}+R|^q.
\]
Moreover, in view of \eqref{=} we get, 
%\[
%\lim_{|y_0|\to0} \int_{\Sigma(k,y_0)}|F_{x_0,y_0,l}+R|^q=\lim_{|y_0|\to0}\int_{S_k}| F_{x_0,y_0,l}+R|^q=\int_{S_k} |F_{x_0,0,l}+R|^q
%\]
 for any $|y_0|\le\tilde\rho$ small enough,
\[
\int_{\Sigma(k,y_0)}|E_{x_0,y_0,l}+R|^q\ge\frac12\int_{S_k}| E_{x_0,y_0,l}+R|^q.
\]
So the first inequality in \eqref{chain} holds with $c_1=1/4$. Since the last inequality in \eqref{chain} is
trivial, 
we are left to prove the central one. Suppose by contradiction that there is a sequence
$(x_h,y_h,l_h)\to0\in\R^n\times\R^n\times\R$ as $h\to\infty$ such that \eqref{chain} does not hold.
\noindent
Reasoning as before we get
\begin{equation}
\begin{aligned}
&\int_{\Sigma(k,y_h)}|E_{x_h,y_h,l_h}+R|^q\le(3/2)^q\int_{\Sigma(k,y_h)}|E_{x_h,y_h,l_h}|^q\\
&\le(3/2)^q \int_{\Sigma(k,y_h)}|M\left\langle \frac{x}{|x|},y_h-x_h\right\rangle +l_h(|x||\nabla u(x)|+(n/q)
u(x))|^q\\
&\le(3/2)^q\int_{\Sigma(k,y_h)}|M|y_h-x_h| +l_h(|x||\nabla u(x)|+(n/q) u(x))|^q \\
&\le(3/2)^q\int_{\Sigma(k,y_h)}|M|v_h| +c_0l_h|^q
\end{aligned}
\end{equation}
where we set $v_h=y_h-x_h$ and
\[
c_0=\limsup_{h\to\infty}\max_{\Sigma(k,y_h)}|x||\nabla u(x)|+\frac nqu(x)>0.
\]
So there exist two constants $k_0,k_1$ such that
\begin{equation}\label{c2}
\int_{\Sigma(k,y_h)}|E_{x_h,y_h,l_h}+R|^q\le k_0\int_{\Sigma(k,y_h)}|k_1l+|v_h||^q.
\end{equation}
On the other hand we have 
\[
\begin{aligned}
&\int_{\Sigma(k,y_h)\cap Q}|E_{x_h,y_h,l_h}+R|^q\ge\frac{1}{2^q}\int_{\Sigma(k,y_h)\cap Q}|E_{x_h,y_h,l_h}|^q\\
&=\frac{1}{2^q}\int_{\Sigma(k,y_h)}\left| |\nabla u(x)|\left\langle
\frac{x}{|x|},\frac{v_h}{|v_h|}\right\rangle|v_h| +l(|x||\nabla u(x)|+(n/q) u(x))  \right|^q.
\end{aligned}
\]
We have now three possible situations: $|v_h|<<l_h$, $l_h<<|v_h|$ or $l_h\simeq|v_h|$ as $h\to\infty$. In the first
case we have, thanks to \eqref{c2}, that
\[
\int_{\Sigma(k,y_h)}|E_{x_h,y_h,l_h}+R|^q\le k_2\int_{\Sigma(k,y_h)}l_h^q
\]
for a suitable $k_2$. Moreover it is easy to find positive constants $k_3$ and $k_4$ not depending on $l_h$ and
$v_h$ such that
\[
\int_{\Sigma(k,y_h)\cap Q}|E_{x_h,y_h,l_h}+R|^q\ge k_3\int_{\Sigma(k,y_h)\cap Q}l_h^q\ge k_4
\int_{\Sigma(k,y_h)}l_h^q.
\]
So in this case \eqref{chain} holds with $c_2=\frac{k_3}{k_4}c_1$ (or, in other terms, we get a contradiction). The
second case, $l_h<<|v_h|$ can be solved with the same argument, with the only observation that, for the estimate
from above, we must  further restrict the set of integration to the set (for instance) $U=\{x\in Q:
|\langle x/|x|,v_0/|v_0|\rangle|\ge1/10 \}$. We are left to study the case where $|v_h|\simeq l_h$. If $\liminf_h
|v_h|/l_h\ge\widetilde c>0$, we have that
\[
\int_{\Sigma(k,y_h)}|E_{x_h,y_h,l_h}+R|^q=l_h^q\int_{\Sigma(k,y_h)}\left|c_0\frac{l_h}{|v_h|}+M\right|^q.
\]
Let us define $V=\{x\in Q: |\langle x/|x|,v_h/|v_h|\rangle|\le\alpha \}$ where $\alpha$ is a constant 
(depending on $\widetilde c$) that will be fixed later. We have
\[
\begin{aligned}
&\int_{\Sigma(k,y_h)\cap Q}|E_{x_h,y_h,l_h}+R|^q\ge\int_{\Sigma(k,y_h)\cap V}|E_{x_h,y_h,l_h}+R|^q\\
&\ge \kappa_0 l^q\int_{\Sigma(k,y_h)\cap V}\left| c_1\frac{l}{|v_h|} -M\alpha \right|^q.
\end{aligned}
\]
Choosing $\alpha$ small enough, since $l_h/|v_h|>>0$, we can find constants $k_1$ and $k_2$ not depending on 
$l_h$ and $v_h$ such
that 
\[
\int_{\Sigma(k,y_h)\cap Q}|E_{x_h,y_h,l_h}+R|^q\ge \kappa_1l^q\int_{\Sigma(k,y_h)\cap V}\left|c_0\frac{l_h}{|v_h|}
+M\right|^q\ge\kappa_2l^q\int_{\Sigma(k,y_h)}\left|c_0\frac{l_h}{|v_h|}+M\right|^q,
\]
so again \eqref{chain} holds with $c_2=k_2/c_1$ (thus again a contradiction). If 
$\liminf_h l_h/|v_h|\ge\widetilde c>0$, we can conclude applying an analogous argument. 
% integrating, in the estimate from above, over
% $V_2=\{x\in Q: |\langle x/|x|,v_h/|v_h|\rangle|\ge1/10 \}$
\end{proof}

We pass now to prove Theorem \ref{t2}. For the sake of clearness we  divide its proof into two parts. We first
prove a 
proposition which provides us a method to pass from a generic function in $D^{p,s}(\R^n)$ to an $(n-1)$-symmetric
function which satisfies  the reduction inequalities \eqref{riduzioneteo}. Then we shall see how to obtain the last
required symmetry.
\begin{proposition}\label{n-1}
There exists a positive constant $C$ such that for every function $u\in D^{p,s}(\R^n)$ there is an
$(n-1)$-symmetric 
function $\tilde{u}$ such that
\begin{equation}\label{riduzione2}
\lambda(u)\leq C\lambda(\tilde{u}),\qquad \delta(\tilde{u})\leq2^{n-1}\delta(u).
\end{equation}
\end{proposition}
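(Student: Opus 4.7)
The plan is to iterate a one-step reduction: starting from $v_0 = u$, I build $v_1, \ldots, v_{n-1}$ where $v_{k+1}$ is symmetric with respect to one more hyperplane than $v_k$, and set $\tilde{u} := v_{n-1}$. At each step I aim for $\delta(v_{k+1}) \leq 2\,\delta(v_k)$ and $\lambda(v_k) \leq C\,\lambda(v_{k+1})$ for some constant $C = C(n,p,q,s)$; telescoping then yields $\delta(\tilde{u}) \leq 2^{n-1} \delta(u)$ and $\lambda(u) \leq C^{n-1} \lambda(\tilde{u})$, which is \eqref{riduzione2}. To implement one step, I use the intermediate value theorem to pick a hyperplane $H$, orthogonal to all previously chosen symmetry hyperplanes, which bisects the $L^q$-mass of $v = v_k$; then I form the two $H$-symmetric reflections
\[
v^+ := v\,\chi_{H^+} + (v\circ T_H)\,\chi_{H^-}, \qquad v^- := v\,\chi_{H^-} + (v\circ T_H)\,\chi_{H^+},
\]
both of $L^q$-norm equal to $\|v\|_q$, and select $v_{k+1} \in \{v^+, v^-\}$ according to the criteria below.

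For the deficit estimate, a direct change of variable gives $\|\nabla v^\pm\|_p^p = 2\|\nabla v\|_{L^p(H^\pm)}^p$ and $\|v^\pm\|_s^s = 2\|v\|_{L^s(H^\pm)}^s$. The elementary $4ab \leq (a+b)^2$ applied to both products yields
\[
G(v^+)\,G(v^-) \leq G(v)^2, \qquad \text{i.e.,} \quad (1+\delta(v^+))(1+\delta(v^-)) \leq (1+\delta(v))^2,
\]
so at least one of $\delta(v^\pm)$ is $\leq \delta(v)$ and, for $\delta(v)$ small, both are $\leq 2\,\delta(v) + O(\delta(v)^2)$; either choice of $v_{k+1}$ thus satisfies the deficit bound.

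The main obstacle is the asymmetry bound. Let $w^\pm$ denote the nearest optimizer to $v^\pm$, so that $\|v^\pm - w^\pm\|_q^q = \lambda(v^\pm)\|v\|_q^q$; by Lemma~\ref{asrel} applied to the $H$-symmetric function $v^\pm$, I may assume each $w^\pm$ is itself $H$-symmetric at the cost of a harmless factor $3^q$. Splitting the integral over the two halves and using $v = v^\pm$ on $H^\pm$ yields the identity
\[
\|v - w\|_q^q = \tfrac{1}{2}\|v^+ - w\|_q^q + \tfrac{1}{2}\|v^- - w\|_q^q
\]
for any $H$-symmetric $w$; if $w^+ = w^-$ this immediately gives $\lambda(v) \leq \tfrac{1}{2}(\lambda(v^+)+\lambda(v^-)) \leq \max(\lambda(v^+),\lambda(v^-))$. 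In general the triangle inequality produces $\lambda(v)\|v\|_q^q \leq C(\lambda(v^+) + \lambda(v^-))\|v\|_q^q + C\,\|w^+ - w^-\|_q^q$, so the remaining task is to bound $\|w^+ - w^-\|_q^q$ by $\lambda(v^\pm)$. This is exactly where Lemma~\ref{quarto} is essential: provided $\|w^+ - w^-\|_q$ is below the smallness threshold of the lemma, its $L^q$-norm is localized to a quarter $Q := H^+ \cap K$ with $\partial K$ orthogonal to $\partial H$, and on $Q \subset H^+$ (respectively on $T_H(Q) \subset H^-$) the difference $|w^+ - w^-|$ can be controlled by $\|v^+ - w^+\|_q$ and $\|v^- - w^-\|_q$ via triangle inequality together with the change of variable $T_H$. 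An absorption argument using the constant of Lemma~\ref{quarto} then yields $\|w^+ - w^-\|_q^q \leq C'(\lambda(v^+) + \lambda(v^-))\|v\|_q^q$, and choosing $v_{k+1}$ as whichever of $v^\pm$ has the larger asymmetry completes the step. The complementary regimes (where $\|w^+ - w^-\|_q$ exceeds the threshold of Lemma~\ref{quarto}, or where $\delta(u)$ is bounded away from $0$) are handled separately using the trivial bound $\lambda \leq 2$.
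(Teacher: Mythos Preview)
Your iteration scheme and your deficit estimate are essentially the paper's, but the asymmetry step has a genuine gap. With a single bisecting hyperplane $H$, once you take $w^\pm$ centred on $\partial H$ and try to control $\|w^+ - w^-\|_{L^q(Q)}$ on a quarter $Q\subset H^+$ by the triangle inequality, you are forced through the term
\[
\|v^+ - v^-\|_{L^q(Q)} \;=\; \|v - v\circ T_H\|_{L^q(Q)}.
\]
Lemma~\ref{asrel2} bounds this only by $C\,\lambda(v)^{1/q}$, i.e.\ by the very quantity you are trying to estimate; since $1/q<1$, this cannot be absorbed when $\lambda(v)$ is small. No choice of the second half-space $K$ in Lemma~\ref{quarto} helps here, because nothing in the one-hyperplane construction gives $v$ (or $v^\pm$) any structure in the $K$-direction.

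The paper circumvents exactly this circularity by spending \emph{two} orthogonal bisecting hyperplanes $H_i, H_j$ at each step and comparing the optimizers $v_i^\sigma$ (centred on $H_i$) with $v_j^\tau$ (centred on $H_j$). Lemma~\ref{quarto} is then applied with the pair $(H_i^\sigma, H_j^\tau)$, and the key estimate~\eqref{ee} is fed into the auxiliary functions $h_k = v_k^+\chi_{H_k^+} + v_k^-\chi_{H_k^-}$, which agree with $u$ (not with $v\circ T_H$) on the relevant half-spaces; this is what eliminates the circular term and yields~\eqref{claim1}. Because two directions are consumed to gain one symmetry, the procedure terminates at $(n-1)$ symmetries, which is also why the proposition stops there.

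A smaller point: your product inequality $G(v^+)G(v^-)\le G(v)^2$ only gives $\delta(v^\pm)\le 2\delta(v)+\delta(v)^2$, hence after $n-1$ iterations you obtain $(2+o(1))^{n-1}\delta(u)$ rather than the stated $2^{n-1}\delta(u)$. The paper uses instead the concavity of $t\mapsto t^{1/p}$, $t\mapsto t^{1/s}$ and of $(x,y)\mapsto x^\theta y^{1-\theta}$ to get the arithmetic-mean bound $\delta(u)\ge \tfrac12(\delta(u_k^+)+\delta(u_k^-))$, which gives the exact factor $2$ at each step.
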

\begin{proof}
As usual, by the homogeneity of the deficit and the asymmetry, we can consider $\|u\|_q=1$. Moreover we can suppose
that $\delta(u)<\bar{\delta}$ for an arbitrary small $\bar{\delta}$. Indeed, if $\delta(u)\geq\bar{\delta}$, let
$v$ be a radial (and so $n$-symmetric!) function such that $0<\delta(v)<2^{n-1}\bar{\delta}$. Then
\[
\lambda(u)\leq2^q=\frac{2^q}{\lambda(v)}\lambda(v)\leq \bar{C}\lambda(v),\qquad \delta(v)\leq2^{n-1}
\bar{\delta}\leq2^{n-1}\delta(u).
\]
\noindent Consider, for $k=1\dots n$, the $n$ hyperplanes orthogonal to the coordinate axis such that
\[
\int_{H_k^+}u^q=\int_{H_k^-}u^q=\frac{1}{2}
\]
where $H_k^\pm$ are the two half spaces in which $\R^n$ is divided by $H_k$. Denoting by $T_k$ the reflection with
respect to $H_k$, we define
\begin{equation}
\begin{aligned}
&u_k^+(x)=
\left\{
\begin{array}{ccc}
	u(x)&\text{if}&x\in H^+_k \\
	u(T_k(x))&\text{if}& x\in H^-_k
\end{array}
\right.\\
&\text{ }\\
&u_k^-(x)=
\left\{
\begin{array}{ccc}
u(x) &\text{if}& x\in H^-_k \\
u(T_k(x))&\text{if}& x\in H^+_k
\end{array}
\right.
\end{aligned}
\end{equation}
\noindent By construction $u_k^\pm$ are symmetric with respect to $H_k$. We observe now that
\[
\int u^s=\frac{\int (u_k^+)^s+\int(u_k^-)^s}{2}\text{,}\qquad \int |\nabla
u|^p=\frac{\int|\nabla u_k^+|^p+\int|\nabla u_k^-|^p}{2},
\]
and since $t\to t^{1/p}$ and $t\to t^{1/s}$ are concave functions, we have that
\begin{equation}\label{d}
\|u\|_s\geq\frac{\|u_k^+\|_s+\|u_k^-\|_s}{2}\text{,}\qquad \|\nabla u\|_p\geq\frac{\|\nabla
u_k^+\|_p+\|\nabla u_k^-\|_p}{2}.
\end{equation}
\noindent By the definition of $\delta(u)$ and since $(x,y)\mapsto x^\theta y^{1-\theta}$ is concave on $\R_+^2$ and strictly increasing in $x$ and $y$, we get that
\[
\begin{aligned}
G\delta(u)&\geq\Bigg(\frac{\|\nabla
u_k^+\|_p+\|\nabla u_k^-\|_p}{2}\Bigg)^\theta\Bigg(\frac{\|u_k^+\|_s+\|u_k^-\|_s}{2}\Bigg)^{1-\theta}-G\\
	&\geq\frac{G}{2}\delta(u_k^+)+\frac{G}{2}\delta(u_k^-),
\end{aligned}
\]
and so 
\[
\delta(u)\geq\frac{\delta(u_k^+)+\delta(u_k^-)}{2}.
\]
\noindent In particular for every $k=1,\dots,n$
\begin{equation*}
\max\{\delta(u_k^+),\delta(u_k^-)\}\leq2\delta(u).
\end{equation*}
%\noindent From here on the proof follows closely the one of Lemma 13 in \cite{CFMP}.
\noindent Let $v_k^+$ and $v_k^-$ be the functions which minimize $\lambda(u_k^\pm|H_k)$. Then, by triangle inequality and Lemma \ref{asrel} we have 
\[
\begin{aligned}\label{dd}
\lambda(u)&\leq\int|u-v_k^+|^q=\int_{H_k^+}|u_k^+-v_k^+|^q+\int_{H_k^-}|u_k^--v_k^+|^q \\
	&\leq2^{q-1}\Bigg(\frac{\lambda(u_k^+|H_k)+\lambda(u_k^-|H_k)}{2}+\int_{H_k^-}|v_k^+-v_k^-|^q\Bigg) \\
	&\leq2^{q-2}3^q\Bigg(\lambda(u_k^+)+\lambda(u_k^-)+\int_{H_k^-}|v_k^+-v_k^-|^q\Bigg).
\end{aligned}
\]
\noindent
We claim that if $\delta(u)$ is small enough, then for any couple of indexes $1\leq i<j\leq n$, for $k_1=i$
or $k_1=j$ the following inequality holds:
\begin{equation}\label{claim1}
\int_{H_{k_1}^-}|v_{k_1}^+-v_{k_1}^-|^q\leq C\Bigg(\int_{H_{k_1}^+}|u_{k_1}^+-v_{k_1}^+|^q+
\int_{H_{k_1}^-}|u_{k_1}^--v_{k_1}^-|^q\Bigg).
\end{equation}
Let us show how this brings to the conclusion: by \eqref{d} and \eqref{claim1} we would have
\[
\lambda(u)\leq C\max\{\lambda(u_{k_1}^+),\text{ }\lambda(u_{k_1}^-)\}, \quad\max\{\delta(u_{k_1}^+),
\delta(u_{k_1}^-)\}\leq2\delta(u).
\]
Thus we would obtain that $u_{k_1}^+$ or $u_{k_1}^-$, say $u_{k_1}^+$, is a $1$-symmetric
function with $L^q$ norm equal to $1$. We can now iterate this procedure exploiting two
hyperplanes between the $(n-1)$ we left, obtaining a $2$-symmetric function which  satisfies the
reductions inequalities \eqref{riduzione2} (with $u_{k_1}^+$ in place of $u$). We can continue such a construction
until we have just one hyperplane left. But then we would have an $(n-1)$-symmetric function which satisfies
inequalities \eqref{riduzione2} that is the claim of the proposition.
Thus we are only left to prove \eqref{claim1}. We divide the proof of it into two further steps: 
\begin{itemize}
\item[\underline{Step 1}]There exists a positive constant $C_0$ such that for all $1\leq i<j\leq n$, $\sigma$, $\tau\in\{+,-\}$ it holds
\begin{equation}\label{ee}
\int|v_i^\sigma-v_j^\tau|^q\leq C_0 \int_{H_i^\sigma\cap H_j^\tau}|v_i^\sigma-v_j^\tau|^q.
\end{equation}
\item[\underline{Step 2}]  \eqref{ee} implies \eqref{claim1}.
\end{itemize}
To verify \eqref{ee} we notice that thanks to Lemma \ref{quarto} the inequality is verified if we are able to find
two positive constants $\rho$ and $C_1$ such that:
\begin{itemize}
\item[(i)] $\int (v_i^\sigma)^q=\int (v_j^\tau)^q=1$;
\item[(ii)] $H_i^\sigma$ ed $H_j^\tau$ are two orthogonal half spaces which contains on their boundary the 
center of symmetry of $v_i^\sigma$ e $v_j^\tau$;
%\item[(iii)] $\int_{H_i^\sigma\cap H_j^\tau}(v_i^\sigma)^q\geq\frac{1}{8}$ \quad and\quad $\int_{H_i^\sigma\cap H_j^\tau}(v_j^\tau)^q\geq\frac{1}{8}$;
\item[(iii)] $\int|v_i^\sigma-v_j^\tau|^q\leq\rho$.
\end{itemize}
\noindent We need to check $(i)-(iii)$. Clearly $(i)$ and $(ii)$ are true by construction. 
% Define $I=H_i^+$ and $J=H^-_i$ and suppose, by contradiction, that for each $\bar{\delta}>0$ there 
%exists $\tilde{u}$ with $\delta(\tilde{u})\leq\bar{\delta}$
%and such that if $\tilde{v}_I^+$ and $\tilde{v}_J^+$ are minima for the relative asymmetry (with respect to $I$ and $J$) of $\tilde{u}_I^+$ and $\tilde{u}_J^+$ respectively, holds
%\[
%\int_{I\cap J}|\tilde{v}_I^+|^q<\frac{1}{8}.
%\]Then we can construct a sequance $u_h$ such that
%\[
%\left\{
%\begin{array}{ll}
%\lambda(u_{h,A}^+|A)=\|u_{h,A}^+-v_{h,A}^+\|_q^q & A=I,J\\
%\delta(u_h)\to0&\\
%\int_{I\cap J}|v_{h,I}^+|^q<\frac{1}{8}&
%\end{array}
%\right.
%\]
%\noindent We get then, thanks to Theorem \ref{compattezza}, that $u_h\to u$ strongly in $L^q(\R^n)$ and $\delta(u)=0$. %Since $u$ has radial symmetry and
%\[
%\int_I u^q=\frac{1}{2}=\int_Ju^q
%\]
%we get
%\[
%\int_{I\cap J}u^q=\frac{1}{4}.
%\]
%Now, $\lambda(u_h)\to0$ and so, by Lemma \ref{asrel}, also $\lambda(u_{h,I}^+|I)\to0$. And we get a contradiction since
%\[
%\begin{aligned}
%0&=\lim_{h\to\infty}\int_{I\cap J}|u_{h,I}^+-v_{h,I}^+|^q\\
%	&\geq\lim_{h\to\infty}2^{q-1}\Bigg(\int_{I\cap J}(u_h^+)^q-\int_{I\cap J}(v_{h,I}^+)^q\Bigg)    %\geq\frac{2^{q-1}}{8}.
%\end{aligned}
%\]
So we are left to verify $(iii)$. We have
\begin{equation}\label{p}
\|v_i^\sigma-v_j^\tau\|_q\leq\|v_i^\sigma-u_i^\sigma\|_q+\|u_i^\sigma-u\|_q+\|u-u_j^\tau\|_q+\|u_j^\tau-v_j^\tau\|_q.
\end{equation}
Thanks to Lemma \ref{asrel2}
\begin{equation}\label{pp}
\int|u_i^\sigma-u|^q=\frac{1}{2}\int|u\circ T_i-u|^q\leq C\lambda(u)^{1/q}.
\end{equation}
Moreover
\begin{equation}\label{ppp}
\int|v_i^\sigma-u_i^\sigma|^q\leq2^{q-1}\Big(\lambda(u|H_i)+\|u-u_i^\sigma\|^p_p\Big)\leq C\lambda(u)^{1/q}.
\end{equation}
The same estimate holds as well for the third and the fourth addend on the right hand side of \eqref{p}. Putting
together \eqref{pp} and \eqref{ppp} we obtain the claim of $(iii)$ and this conclude the proof of Step 1.
Let us prove now Step 2. Suppose, to fix the ideas, that $i=1$ and $j=2$. For $k=1,2$ we set
\[
h_k=v_k^+\chi_{H_k^+}+v_k^-\chi_{H_k^-}
\]
where $\chi_A$ denotes the characteristic function of the set $A$.
Thanks to \eqref{ee},
\begin{equation*}
\int|h_1-h_2|^q\geq\int_{H_1^+\cap H_2^+}|h_1-h_2|^q=\int_{H_1^+\cap
H_2^+}|v_1^+-v_2^+|^q\geq\frac{1}{C}\int|v_1^+-v_2^+|^q.
\end{equation*}
With a similar argument, using ${H_1^-\cap H_2^+}$ instead of $H_1^+\cap H_2^+$ we get
\[
\int|h_1-h_2|^q\geq\frac{1}{C}\int|v_1^--v_2^+|^q.
\]
Hence
\begin{equation}\label{f}
\int|v_1^+-v_1^-|^q\leq2^qC\int|h_1-h_2|^q.
\end{equation}
Similarly we can see that
\begin{equation}\label{ff}
\int|v_2^+-v_2^-|^q\leq2^qC\int|h_1-h_2|^q.
\end{equation}
Furthermore we have
\begin{equation}\label{fff}
\begin{aligned}
\int|h_1-h_2|^q&\leq2^{q-1}\Bigg(\int|h_1-u|^q+\int|h_2-u|^q\Bigg)\\
	&=2^{q-1}\Bigg(\int_{H_1^+}|v_1^+-u_1^+|^q+\int_{H_1^-}|v_1^--u_1^-|^q\\
	&+\int_{H_2^+}|v_2^+-u_2^+|^q+\int_{H_2^-}|v_2^--u_2^-|^q\Bigg) \\
	&\leq2^q\max\Bigg\{\int_{H_1^+}|v_1^+-u_1^+|^q+\int_{H_1^-}|v_1^--u_1^-|^q, \\
	&\int_{H_2^+}|v_2^+-u_2^+|^q+\int_{H_2^-}|v_2^--u_2^-|^q\Bigg\}\\
	&=2^q\Bigg(\int_{H_{k}^+}|v_{k}^+-u_{k}^+|^q+\int_{H_{k}^-}|v_{k}^--u_{k}^-|^q\Bigg).
\end{aligned}
\end{equation}
\noindent putting together \eqref{f}, \eqref{ff} and \eqref{fff} we obtain the claim of Step 2. 
\end{proof}
We stress that we can not symmetrize directly our function once again. A formal argument which shows a
problem 
that may occur is the following: consider a function $v$ such that $\delta(v)=\lambda(v)=0$ and construct a
function $u$ as follows:
\[
u(x)=v(x)\chi_{\{x_1\ge0\}}(x)+2^{n/q}v(2x)\chi_{\{x_1<0\}}(x).
\]
It follows that $u$ is $(n-1)$-symmetric in $H_k=\{x_k=0\}$ for $k\neq1$. if we try to symmetrize 
such function with respect to $H_1$ we would obtain $u_+(x)=2^{n/q}(v(2x))$ and $u_-(x)=v(x)$. Clearly none of them
 satisfy the first inequality in \eqref{riduzioneteo}. However we are going to see that a more refined kind of
symmetrization can bypass this problem. 
\begin{proof}[Proof of Theorem \ref{t2}]
Assume as usual that $\|u\|_q=1$. We can assume, thanks to Proposition \ref{n-1}, that $u$ is an $(n-1)$-symmetric 
function and that $\delta(u)<\bar{\delta}$ with $\bar{\delta}$ arbitrarily small. Up to a rotation and a
translation, we can consider $u$ to be symmetric with respect to the coordinate axes $\{x_k=0\}$ for $k=2,\dots,n$
and such that
\[
\int_{\{x_k>0\}}u^q=\frac{1}{2}=\int_{\{x_k<0\}}u^q.
\]
\noindent Let $u^\pm$ be the two symmetrizations of $u$ with respect to the hyperplane $\{x_1=0\}$, constructed 
as in Proposition \ref{n-1} . We have that $\max\{\delta(u^+),\delta(u^-)\}\leq2\delta(u)$. So, if
$\min\{\lambda(u^+),\lambda(u^-)\}\geq C_0\lambda(u)$, we would be done. Thus we can suppose that
\begin{equation}
\max\{\lambda(u^+),\lambda(u^-)\}<\varepsilon\lambda(u)
\end{equation}
for some constant $\varepsilon$ to be chosen. Consider $Q=\{|x_1|\leq x_2\}$, $Q^+=Q\cap\{x_1>0\}$ and 
$Q^-=Q\cap\{x_1<0\}$ and define a function $\hat{u}$ as follows:
\[
\hat{u}(x)=
\begin{cases}
u(x)\qquad&\text{if}\quad x\in Q\\
u(R_1x) &\text{if}\quad x\in R_1(Q)\\
\hat{u}(R_2x) &\text{if}\quad x\in R_2(Q\cup R_1(Q))
\end{cases}
\]
\noindent where $R_1$ and $R_2$ are the reflections with respect to $\{x_1=x_2\}$ and $\{x_1=-x_2\}$ respectively. 
The function $\hat{u}$ satisfies all the  symmetries of $u$ with the exception of the one related to the hyperplane
$\{x_2=0\}$, but by construction it is symmetric also with respect to $\{x_1=\pm x_2\}$. So it is $n$-symmetric. It
remains  to show that $\hat{u}$ satisfies the reduction inequalities \eqref{riduzioneteo}. Let us start with
the one which concerns the asymmetry. To this aim we denote by $\hat{v}$, $v^+$ and $v^-$ the functions which
achieve the minimum of $\lambda(\hat{u}|\{0\})$, $\lambda(u^+|\{0\})$ and $\lambda(u^-|\{0\})$ respectively. Since 
$\int \hat{u}^q\leq4$, we get that
\[
\begin{aligned}
3^q\lambda(\hat{u})&\geq\lambda(\hat{u}|\{0\})=\frac{\int|\hat{u}-\hat{v}|^q}{\int\hat{u}^q}=
\frac{4}{\int\hat{u}^q}\int_Q|u-\hat{v}|^q \\
	&\geq\int_{Q^+}|u^+-\hat{v}|^q+\int_{Q^-}|u^--\hat{v}|^q \\
	&=\int_{Q^+}|u^+-\hat{v}|^q+\int_{Q^+}|u^--\hat{v}|^q\geq\frac{1}{2^{q-1}}\int_{Q^+}|u^+-u^-|^q.
\end{aligned}
\]
\noindent The first inequality in \eqref{riduzioneteo} is then true if we can estimate $\lambda(u)$ in terms of 
$\int_{Q^+}|u^+-u^-|^q$. To this aim we observe that
\begin{equation}\label{g}
\begin{aligned}
\|u^+-u^-\|_{L^q(Q^+)}&=\frac{1}{2}\|u^+-u^-\|_{L^q(Q)}\\
	&=\frac{1}{2}\|(v^+-v^-)-(v^+-u^+)-(u^--v^-)\|_{L^q(Q)} \\
	&\geq\frac{1}{2}\Big[\|v^+-v^-\|_{L^q(Q)}-\|u^+-v^+\|_{L^q(Q)}-\|u^--v^-\|_{L^q(Q)}\Big].
\end{aligned}
\end{equation}
Moreover
\begin{equation}\label{gg}
\int_Q|u^\pm-v^\pm|^q\leq\int|u^\pm-v^\pm|^q=\lambda(u^\pm|\{0\})\leq3^q\lambda(u^\pm)\leq\varepsilon3^q\lambda(u)
\end{equation}
where we exploited Lemma \ref{asrel} and Lemma \ref{asrel2} and the fact that $u^\pm$ are $n$-symmetric functions.
\noindent Thanks to \eqref{g} and \eqref{gg} we obtain
\[
\|u^+-u^-\|_{L^q(Q^+)}\geq\frac{1}{2}\Big[\|v^+-v^-\|_{L^q(Q)}-2\Big(\frac{3^q}{C(\varepsilon)}\lambda(u)\Big)^{1/q}\Big],
\]
where $C(\varepsilon)$ is a suitable positive constant. Furthermore, always thanks to \eqref{gg} we have
\[
\begin{aligned}
\lambda(u)&\leq\int|u-v^+|^q=\frac{1}{2}\int|v^+-u^+|^q+\frac{1}{2}\int|v^+-u^-|^q\\
	&\leq\varepsilon\frac{3^q}{2}\lambda(u)+2^{q-2}\Bigg(\varepsilon3^q\lambda(u)+\int|v^+-v^-|^q\Bigg),
\end{aligned}
\]
\noindent and so
\begin{equation}
\int|v^+-v^-|^q\geq\frac{1}{2^{q-2}}\left[\lambda(u)-\varepsilon\frac{3^q}{2}\lambda(u)-\varepsilon3^q\lambda(u)
\right]
\geq\frac{1}{2^q}\lambda(u)
\end{equation}
where last inequality is justified if we choose $\varepsilon$ small enough. Summarizing we obtained
\[
\|u^+-u^-\|_{L^q(Q^+)}\geq C_0\lambda(u)^{1/q},
\]
where $C_0$ is a suitable positive constant not depending on $u$.
Let us consider now the second inequality in \eqref{riduzioneteo}, that is the one concerning the deficit. Notice
that we do not know if $\|\hat{u}\|_q=1$.
Nevertheless we have
\[
\Big|\|\hat{u}\|_{L^q(Q^+)}-(1/8)^{1/q}\Big|=\Big|\|u^+\|_{L^q(Q^+)}-\|v^+\|_{L^q(Q^+)}\Big|\leq\|u^+-v^+\|_{L^q(Q^+)}.
\]
and, since $|s^q-t^q|\leq C_1|s-t|$ for a suitable constant $C_1$ if $s$ and $t$ are in $[0,1]$, we get
\begin{equation}\label{j}
\begin{aligned}
\Bigg|\int_{Q^+}\hat{u}^q-\frac{1}{8}\Bigg|&\leq C_2\|u^+-v^+\|_q= C_2\lambda(u^+|\{0\})^{1/q} \\
	&\leq 3^{1/q}C_3\lambda(u^+)^{1/q}\leq 3^{1/q}C_4\delta(u^+)^{\alpha/q}\leq 2C_4\delta(u)^{\alpha/q},
\end{aligned}
\end{equation}
where we used Lemma \ref{asrel}, the fact that $u$ is $n$-symmetric and that $\delta(u^+)\le2\delta(u)$ and where 
$C_2$, $C_3$ and $C_4$ are positive constants. Analogous estimates hold on $Q^-$, on
$U^+=\{x_1>0,x_2>0\}\setminus Q$ and on $U^-=\{x_1<0,x_2>0\}\setminus Q$. Then, by the triangle inequality we get
\begin{equation}\label{40}
\Big|\int\hat{u}^q-1\Big|\leq C\delta(u)^{\alpha/q}
\end{equation}
for a suitable $\alpha>0$. Let us recall the definition of the functionals $F$ and $G$ given in \eqref{FeG1}:
\begin{equation}
G(u)=\|\nabla u\|_p^\theta\|u\|_s^{1-\theta},\qquad F(u)=\int|\nabla u|^p+\int|u|^s.
\end{equation}
By Lemma \ref{FeGcollegate} we know that there exist two positive constants $\kappa$ and $\eta_0$ such that for
every $u$ there exists $\lambda>0$ such that $F(\tau_\lambda u)=\eta_0 G(u)^\kappa$ where $\tau_\lambda
u(x)=\lambda^{n/q}u(\lambda x)$. 
Furthermore such $\lambda$ minimizes the function $\mu\mapsto F(\tau_\mu u)$ in $\R^+$. So we get
\begin{equation}\label{h}
%\begin{aligned}
G^\kappa \|\hat{u}\|_q^\kappa(1+\delta(\hat{u}))^\kappa =G(\hat{u})^\kappa
\leq\Bigg(\frac{1}{\eta_0}\int\big|\nabla \tau_\lambda
\hat{u}\big|^p+\frac{1}{\eta_0}\int\big|\tau_\lambda\hat{u}\big|^s\Bigg)
%\end{aligned}
\end{equation} 
for all $\lambda>0$. Then
\begin{equation}\label{hh}
\begin{aligned}
&\frac{1}{\eta_0}\Bigg(\int\big|\nabla \tau_\lambda\hat{u}\big|^p+\int\big|\tau_\lambda\hat{u}\big|^s\Bigg)
=\frac{4}{\eta_0}\Bigg(\int_{Q^+\cup Q^-}\big|\nabla \tau_\lambda\hat{u}\big|^p+\int_{Q^+\cup
Q^-}\big|\tau_\lambda\hat{u}\big|^s\Bigg) \\
	&=\frac{4}{\eta_0}\Bigg(\int_{\{x_2>0\}}\big(\big|\nabla \tau_\lambda u\big|^p+\big|\tau_\lambda
u\big|^s\big)-
\int_{U^+\cup U^-}\big(\big|\nabla \tau_\lambda u \big|^p+\big|\tau_\lambda u\big|^s\big)\Bigg).
\end{aligned}
\end{equation}
\noindent Choosing $\lambda>0$ such that $F(\tau_\lambda u)=\eta_0 G(u)^\kappa$, since $u$ is symmetric with respect to $\{x_2=0\}$, we get
\begin{equation}\label{hhh}
\begin{aligned}
\int_{\{x_2>0\}}\big(\big|\nabla \tau_\lambda u\big|^p+\big|\tau_\lambda u\big|^s\big)&=\frac{1}{2}F(\tau_\lambda
u)\\
	&=\frac{\eta_0}{2}G(u)^\kappa=\frac{\eta_0G^\kappa}{2}(\delta(u)+1)^\kappa\\
	&\leq\frac{\eta_0G^\kappa}{2}(1+C_5\delta(u)),
\end{aligned}
\end{equation}
where the last inequality is true for $\delta(u)$ small enough. Let us consider now the function 
\[
v(x)=\begin{cases}
	\tau_\lambda u(x) & \quad x\in U^+\\
	\tau_\lambda u(R_1x)&  \quad x\in R_1U^+\\
	v(S_1x) &\quad x\in\{x_1<0,x_2>0\}\\
	v(S_2x) &\quad x\in\{x_2<0\}
\end{cases}
\]
where $S_i:\R^n\to\R^n$ denotes the symmetrization map with respect to $\{x_i=0\}$.
We have
\begin{equation}\label{hhhh}
\begin{aligned}
\int_{U^+}\big(\big|\nabla
\tau_\lambda(u)\big|^p+\big|\tau_\lambda(u)\big|^s\big)&=\frac{1}{8}\Bigg(\int\big(\big|\nabla
v\big|^p+\big|v\big|^s\big)\Bigg) \geq\frac{1}{8}\min_{\mu\in\R^+}F(\tau_\mu v)\\
	&=\frac{\eta_0}{8}G(v)^\kappa \geq\frac{\eta_0G^\kappa}{8}\Bigg(\int v^q\Bigg)^{\kappa/q}=\frac{\eta_0G^\kappa}{8}\cdot8^{\kappa/q}\Bigg(\int_{U^+}u^q\Bigg)^{\kappa/q}\\
	& \geq \eta_0G^\kappa 8^{-1+\kappa/q}(\frac{1}{8}-C_6\delta(u)^\alpha)^{\kappa/q} \geq
\eta_0G^\kappa(\frac{1}{8}-C_7\delta(u)^\alpha), 
\end{aligned}
\end{equation}
\noindent for suitable constants $C_6$ and $C_7$ and for $\delta(u)$ small enough. Notice that in the last
inequality we made use of  \eqref{j}. An analogous estimate can be obtained on $U^-$. By \eqref{h}, \eqref{hh},
\eqref{hhh} and \eqref{hhhh} we get
\begin{equation}\label{hhhhh}
\begin{aligned}
G^\kappa\|\hat{u}\|_q^\kappa(1+\delta(\hat{u}))^\kappa &\le\frac{4}{\eta_0}\Big[\frac{\eta_0G^\kappa}{2}(1+C_5\delta(u))-2\eta_0G^\kappa\left(\frac{1}{8}-C_7\delta(u)\right)^\beta\Big] \\
	&=G^\kappa\Big[1+C_8\delta(u)^\alpha\Big]
\end{aligned}
\end{equation}
for a suitable $C_8>0$ and where $\alpha$ is the minumum between  $1$ and $\beta$. By \eqref{hhhhh} and thanks to \eqref{40} we conclude that
\begin{equation*}
\delta(\hat{u})\leq\frac{1+C_9\delta(u)^\alpha}{1-C_9\delta(u)^\alpha}-1\leq C\delta(u)^\alpha
\end{equation*}
that is true again for $\delta(u)$ small enough. The conclusion follows setting $\kappa_2=C$.

\end{proof}

\section{Acknowledgements}
The author wish to warmly thank F. Maggi for its enlightening advise during the development of this work, and 
L. Brasco, G. De Philippis and A. Pratelli  for several useful discussions about  the topic.


\begin{thebibliography}{100}

\bibitem{Au} T. Aubin, {\em  Probl\`emes isop\'erim\'etriques et espaces de Sobolev}, J. Differential Geometry, 
\textbf{11} (1976), 573-598.
%
\bibitem{BZ} J. E. Brothers, W. P. Ziemer, {\em Minimal rearrangements of Sobolev functions}, J. reine angew. Math.
\textbf{384} (1988), 153-179.
%
% \bibitem{CL} M. Cicalese, G. Leonardi, 
% {\em A selection principle for the sharp quantitative isoperimetric inequality}, Arch. Ration. Mech. Anal., {\bf
% 206} (2012),  617-643.
%
\bibitem{CaFL} E. A. Carlen, R. L. Frank, E. H. Lieb {\em Stability estimates for the lowest eigenvalue 
of a Schrödinger operator}, Geom. Funct. Anal., to appear (2013).

%\bibitem[CEFT]{CianchiEspositoFuscoTrombetti} A. Cianchi, L. Esposito, N. Fusco, C. Trombetti, {\em A Quantitative
%P\'olya-Szeg\"{o} Principle}, J. reine. angew. Math., \textbf{614} (2008), 153-189
%
\bibitem{be} G. Bianchi, H. Egnell, {\em A note on the Sobolev inequality}, J. Funct. Anal., {\bf 100} (1991), 
18-24.
\bibitem{CENV} D. Cordero-Erausquin, B. Nazaret, C. Villani, {\em A Mass-Transportation Approach to Sobolev and
Nirenberg Inequalities},  Adv. Math.,  \textbf{182} (2004), 307-332.
%
\bibitem{CaF}  E. A. Carlen, A. Figalli, {\em Stability for a GNS inequality and the log-HLS inequality,
with application to the critical mass Keller-Segel equation}.
Duke Math. J., {\bf 162} (2013), 579-625. 
%\bibitem{CF} A. Cianchi, N. Fusco, {\em Functions of Bounded Variation and Rearrangements},  Arch. Rational Mech. Anal. \textbf{165} (2002) 1-40
%
\bibitem{CFMP} A. Cianchi, N. Fusco, F. Maggi, A. Pratelli, {\em The sharp Sobolev inequality in 
quantitative form},  J. Eur. Math. Soc., \textbf{11} (2009), 1105-1139.
%
%\bibitem[CL]{CarlenLoss} E. A. Carlen, M. Loss, {\em Sharp Constant in Nash's Inequality}, Sc. Math. Geor. Inst. Tech. \textbf{7} (1993), 213-215
%
%\bibitem[CM]{CecconMontenegro} J. Ceccon, M. Montenegro, {\em General optimal euclidean Sobolev and Gagliardo-Nirenberg inequalities} An. Acad. Bras. Ci$\hat{e}$. \textbf{77 (4)}, 581-587
%
\bibitem{DelPinoDolbeault} M. Del Pino, J. Dolbeault, {\em Best constants 
for Gagliardo-Nirenberg inequalities and applications to nonlinear diffusions}, J. Math. Pures Appl., {\bf 81}
(2002) 847-875.
%

\bibitem{Evans} L.C. Evans, R. F. Gariepy, {\em Measure theory and fine properties of functions}, Studies in
Advenaced Mathematics. CRC Press, Boca Raton, FL, (1992).

%

% % \bibitem{FiMP} A. Figalli, F. Maggi, A. Pratelli, {\em A mass transportation approach to quantitative isoperimetric
% % inequalities}, Invent. Math., 182 (2010), no. 1, 167-211.
% % %
% % \bibitem{FJ} N. Fusco, V. Julin {\em A strong form of the quantitative isoperimetric inequality}, 
% % Calc. Var. Partial Differential Equations, to appear (2013).
% % \bibitem{FMP1} N. Fusco, F. Maggi, A. Pratelli, {\em The sharp quantitative isoperimetric
% % inequality}.  Ann. of Math.,  \textbf{168 } (2008), 941-980.
% % %
% % \bibitem{FMP2} N. Fusco, F. Maggi, A. Pratelli, {\em 
% % The sharp quantitative Sobolev inequality for functions of bounded variation},  J. Funct. Anal.,  \textbf{244} 
% % (2007), 315-341.
% % 
% % %
% % \bibitem{H} R. R. Hall, {\em A quantitative isoperimetric inequality in n-dimensional
% % }, J. Reine Angew. Math., \textbf{428} (1992), 161-176.
% % 
% % 
% % %
% % \bibitem{HHW} R. R. Hall, W. K. Hayman, A. W. Weitsman,
% % {\em On asymmetry and capacity}, J. d'Analyse Math., \textbf{56} (1991), 87-123.
%
%\bibitem[FV]{FV} A. Ferone, R. Volpicelli, {\em Convex rearrangement: Equalilty cases in the P\'olya-Szeg\"{o} Inequality}, Calc. Var. \textbf{21}, (2004), 259-272
%
%\bibitem[G]{Gagliardo} E. Gagliardo, {\em Propriet\`a di alcune classi di funzioni di pi\`u variabili}, Ric. Mat. \textbf{7} (1958), 102-137
%
%\bibitem[H]{Hall} R. R. Hall, {\em A quantitative isoperimetric inequality in n-dimensional}, J. Reine Angew. Math. \textbf{428} (1992), 161-176
%
%\bibitem[HHW]{HallHaymanWeitsman} R. R. Hall, W. K. Hayman, A. W. Weitsman, {\em On asymmetry and capacity}, J. d'Analyse Math. \textbf{56} (1991), 87-123
%
\bibitem{liebloss} E. H. Lieb, M. Loss, {\em Analysis}, Graduate studies in Mathematics, AMS, (1996).
%
\bibitem{llcc1} P.L. Lions, {\em The concentration-compactness principle in the Calculus of Variation. 
The Locally compact case, part 1}, Annales de l'I. H. P., \textbf{4} (1984), 109-145.
%
%\bibitem[L2]{llcc2} P.L. Lions, {\em The concentration-compactness principle in the Calculus of Variation. The Locally compact case, part 2}. Annales de l'I. H. P., section C, tome 1, \textbf{4} (1984), 223-283
%
%\bibitem[L3]{llc1} P.L. Lions, {\em The Concentration-Compactness Principle in the Calculus of Variations. The limit case, part 1}. Revista Matem\'atica iberoamericana Vol. 1, \textbf{1} (1985), 145-201
%

\bibitem{llc2} P.L. Lions, {\em The Concentration-Compactness Principle in the Calculus of Variations. The 
locally compact case, part 2}, Revista Matem\'atica iberoamericana, \textbf{2} (1985), 45-121.
%
% % 
% % \bibitem{M2} F. Maggi, {\em Sets of Finite Perimeter and Geometric Variational Problems},
% % Cambridge Studies in Advanced Mathematics 135, Cambridge 
% % University Press, (2012). 
% \bibitem{M1} F. Maggi, {\em Some methods for studying stability in isoperimetric type 
% problems}, Bull. Amer. Math. Society, \textbf{45} (2008), 367-408.

%\bibitem[M]{Maggi} F. Maggi, {\em Some methods for studying stability in isoperimetric type problems}, Bull. Amer. Math. Society, \textbf{45 (3)} (2008), 367-408
%
%\bibitem[Na]{Nash} J. Nash, {\em Continuity of solutions of parabolic and elliptic equations}, Amer. J. Math. \textbf{80} (1958), 931-954
%
%\bibitem[Ni]{Nirenberg} L. Nirenberg, {\em On elliptic partial differential equations}, Ann. S. Norm. Pisa \textbf{13} (1959), 116-162
%
\bibitem{S} M. Struwe, {\em Variational Methods}, \textbf{34}, Springer-Verlag.
%
\bibitem{ST} J. Serrin, M. Tang, {\em Uniqueness of ground states for quasilinear 
elliptic equations}, Indiana Univ. Math. J., \textbf{49} (2000), 897-923.
%
\bibitem{Ta} G. Talenti,  {\em Best constants in Sobolev inequality}. Ann. Mat. Pura Appl.,  (1976), 353-372.
%
\end{thebibliography}
\end{document}